\newcommand{\e}[0]{\varepsilon}
\newcommand{\ra}[0]{\rightarrow}
\newcommand{\R}[0]{\mathbb{R}}
\newcommand{\Q}[0]{\mathbb{Q}}
\newcommand{\N}[0]{\mathbb{N}}
\newcommand{\Z}[0]{\mathbb{Z}}
\newcommand{\bs}[0]{\backslash}
\newcommand{\set}[1]{\left\{ #1\right\}}
\newcommand{\BT}[0]{\mathbb{T}}
\newcommand{\CC}[0]{\mathcal{C}}
\newcommand{\eKG}[0]{\e\mbox{-Kronecker}}
\newcommand{\ag}[1]{\langle#1\rangle}
\newcommand{\ali}[1]{\begin{align*}#1\end{align*}}
\renewcommand{\Bar}[1]{\overline{#1}}
\renewcommand{\Hat}[1]{\widehat{#1}}
\newcommand{\Ga}[0]{\Gamma}
\newcommand{\ga}[0]{\gamma}
\newcommand{\range}[0]{\operatorname{Range}}
\newcommand{\Proj}[0]{\operatorname{Proj}}
\newcommand{\lcm}[0]{\operatorname{lcm}}
\theoremstyle{theorem}
\newtheorem{thm}{Theorem}[section]
\newtheorem{lem}[thm]{Lemma}
\newtheorem{prop}[thm]{Proposition}
\newtheorem{cor}[thm]{Corollary}
\theoremstyle{definition}
\newtheorem*{defn}{Definition}
\newtheorem*{exmp}{Example}
\theoremstyle{remark}
\newtheorem*{rem}{Remark}
\begin{document}
\baselineskip=17pt

%\sloppy
\title[Existence of Large Independent-Like Sets]{Existence of Large Independent-Like Sets}

\author[Robert (Xu) Yang]{Robert (Xu) Yang}
\address{Department of Pure Mathematics University of Waterloo Waterloo ON N2L 3G1, CANADA}
\email{yangxu\_robert@hotmail.com}

\date{}

\begin{abstract}
Let $G$ be a compact abelian group and $\Ga$ be its discrete dual group. For $N \in \N$, we define a class of independent-like sets, $N$-PR sets, as a set in $\Ga$ such that every $\Z_N$-valued function defined on the set can be interpolated by a character in $G$.  

These sets are examples of $\e$-Kronecker sets and Sidon sets. In this paper we study various properties of $N$-PR sets. We give a characterization of $N$-PR sets, describe their structures and prove the existence of large $N$-PR sets.
\end{abstract}

\subjclass[2010]{Primary 43A25; Secondary 43A46}

\keywords{Sidon Sets, Independent Sets, Kronecker Sets}

\thanks{Research was partially supported by NSERC grant 2016-03719}

\maketitle

\section{Introduction}

Independence is a property prevalent throughout mathematics. In harmonic analysis, independence has been used to produce curious examples. For example, a perfect independent set in $\R$ was first constructed by Von Neumann \cite{Von}. Independent Cantor sets in non-discrete locally compact abelian groups were constructed by Hewitt and Kakutani \cite{HandK}, in part for showing that $M(G)$ is asymmetric \cite{H}. In these examples, by `independent' we mean algebraically independent: a set of (non-trivial) characters $E$ is independent if whenever $\ga_1,...,\ga_N \in E$, $m_i \in \Z$, and $\prod_{i=1}^N \ga_i^{m_i} = 1$, then $\ga_i^{m_i} = 1$ for all $i$.

The Rademacher functions in the dual of the infinite direct product of infinitely many copies of $\Z_2$ is a set of characters which is both algebraically and probabilistically independent. These functions have proven to be very useful in harmonic analysis. In particular, they have the property that every $\pm 1$-valued function defined on the set is evaluation at some $x$ in the group. A similar interpolation property holds for all algebraically independent sets.

A weaker notion than independence is an $\e$-Kronecker set.
 
\begin{defn}
Let $\e > 0$. The set $E$ is said to be (weak) $\e$-Kronecker if for every $\varphi: E \ra \BT$ there exists $x \in G$ such that $|\varphi(\ga) - \ga(x)| < \e$ for all $\ga \in E$ (resp. $|\varphi(\ga) - \ga(x)| \leq \e$ for all $\ga \in E$).
\end{defn}

This notion was inspired in part by the classical approximation theorem of Kronecker, with early work done by by Hewitt and Kakutani \cite{HandK} and Rudin \cite{Rudin2}. The terminology was introduced by Varapolous in \cite{bigV}.

The set of Rademacher functions is clearly an example of a weak $\sqrt{2}$-Kronecker set. Kronecker-like sets have been studied intensively, and are known to have many interesting properties. For example, Hare and Ramsey \cite{HareR} proved that every $\e$-Kronecker set with $\e < 2$ is a Sidon set. Graham and Hare in \cite{2012paper} introduced the weaker notion of pseudo-Rademacher sets, sets of characters where every $\pm1$-valued function is point-wise evaluation, in order to study the problem of the existence of Kronecker-like sets. Galindo and Hernandez in \cite{GaandH} and Graham and Lau in \cite{GandL} both consider interpolation sets of characters of finite order. For other references and further background information we refer the reader to \cite{Harebook}.

In this paper, we generalize this notion to $N$-pseudo-Rademacher sets (or $N$-PR sets for short), sets of characters with the property that every $\Z_N$-valued function on the set is point-evaluation. Of course, a pseudo-Rademacher set is a $2$-PR set and we will see that $N$-PR sets are $\e$-Kronecker sets for suitable $\e = \e(N)$, which tends to $0$ as $N \ra \infty$. We give an algebraic characterization of $N$-PR sets, compare them with $\e$-Kronecker sets, describe their structures and prove existence theorems (\cref{thm2.8}) of large $N$-PR sets. \cref{thm2.8} gives a new proof that any uncountable subset in $\Ga$ contains a large $\e$-Kronecker set.

\section{Characterization of $N$-PR sets}
Throughout this paper $G$ is a compact abelian group and $\Ga$ is its discrete dual group. 

\begin{defn} 
Let $E \subset \Ga$ be a subset and $N \in \N$. We define $E$ to be an  ``$N$-pseudo-Rademacher" set (or $N$-PR set) if for every $\varphi: E \ra \Z_N \subset \BT$, there exists $x \in G$ such that $\varphi(\ga) = \ga(x)$ for all $\ga \in E$. 
\end{defn}

In this section we give an algebraic characterization of $N$-PR sets. We first establish some useful lemmas.

\begin{lem}[Quotient and Subgroup Lemma] \label{lem2.1}
Let $E \subset \Ga$, $n \in \N$, $\ga \in \Ga$ and $\Lambda \subset \Ga$ be a subgroup.

(1) Let $q: \Ga \ra \Ga/\Lambda$ be the quotient map. If $q$ is one-to-one on $E$ and $q(E)$ is $n$-PR, then $E$ is $n$-PR. 

(2) Suppose $E \subset \Lambda$. Then $E$ is $n$-PR as a subset of $\Ga$ if and only if $E$ is $n$-PR as a subset of $\Lambda$.
\end{lem}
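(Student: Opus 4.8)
The plan is to translate both statements into the language of Pontryagin duality, where ``$n$-PR'' becomes a statement about which $\Z_n$-valued functions on a set of characters are restrictions of a single point-evaluation. Throughout I write $\widehat{\Gamma} = G$, and for the subgroup $\Lambda \subset \Gamma$ I use its annihilator $\Lambda^\perp = \{x \in G : \gamma(x) = 1 \text{ for all } \gamma \in \Lambda\}$, a closed subgroup of $G$. The two facts I will lean on are the canonical identifications $\widehat{\Gamma/\Lambda} \cong \Lambda^\perp$ (a character of $\Gamma/\Lambda$ is exactly a character of $\Gamma$ that kills $\Lambda$) and $\widehat{\Lambda} \cong G/\Lambda^\perp$ (restriction $G \to \widehat{\Lambda}$ is surjective with kernel $\Lambda^\perp$). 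The surjectivity here is precisely the statement that every character of the subgroup $\Lambda$ extends to a character of $\Gamma$, equivalently the injectivity of $\mathbb{T}$ as a $\Z$-module.

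For (1), given any $\varphi: E \to \Z_n$, I would use the injectivity of $q|_E$ to push $\varphi$ forward to a well-defined function $\psi: q(E) \to \Z_n$ by setting $\psi(q(\gamma)) = \varphi(\gamma)$. Since $q(E)$ is $n$-PR in $\Gamma/\Lambda$, there is a character of $\Gamma/\Lambda$ implementing $\psi$; under the identification $\widehat{\Gamma/\Lambda} \cong \Lambda^\perp$ this is an element $y \in \Lambda^\perp \subset G$, and the compatibility of the two pairings gives $q(\gamma)(y) = \gamma(y)$ for every $\gamma$. Unwinding the definitions then yields $\varphi(\gamma) = \gamma(y)$ for all $\gamma \in E$ with $y \in G$, so $E$ is $n$-PR.

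For (2), both implications are short. If $E$ is $n$-PR in $\Gamma$, I would take the interpolating $x \in G$ for a given $\varphi$ and simply restrict it to $\Lambda$, observing that $\gamma(x)$ is unchanged for $\gamma \in E \subset \Lambda$; this exhibits $\varphi$ as evaluation at a character of $\Lambda$. Conversely, if $E$ is $n$-PR in $\Lambda$, a given $\varphi$ is implemented by some $\chi \in \widehat{\Lambda}$, and extending $\chi$ to a character $x \in \widehat{\Gamma} = G$ gives $\gamma(x) = \chi(\gamma) = \varphi(\gamma)$ on $E$, so $E$ is $n$-PR in $\Gamma$.

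The only genuinely non-formal input is the extension step in the converse of (2): it is the one place where I must invoke that a character of a subgroup of a discrete abelian group extends to the whole group. Everything else is a matter of unwinding the duality identifications and checking that the interpolating point lands in $G$ (automatic in (1), since $\Lambda^\perp \subset G$, and forced by the identifications in (2)). I therefore expect the write-up to be essentially bookkeeping, with the character-extension property cited as the key ingredient.
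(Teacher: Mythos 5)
Your proof is correct and takes essentially the same route as the paper's: part (1) pushes $\varphi$ forward through the injective quotient map and pulls the interpolating point back via $\Hat{\Ga/\Lambda} \cong \Lambda^\perp \subset G$, and part (2) moves between $G$ and $\Hat{\Lambda} \cong G/\Lambda^\perp$, exactly as in the paper. The only cosmetic difference is that you explicitly flag the character-extension step (divisibility of $\BT$) in the converse of (2), which the paper leaves implicit in the identification $\Hat{\Lambda} = G/\Lambda^\perp$ and dismisses as ``similar.''
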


\begin{proof}
(1) Suppose $q: \Gamma \ra \Ga / \Lambda$ is one-to-one on $E$ and $q(E)$ is $n$-PR. We will show that $E$ is $n$-PR. Let $\varphi: E \ra \Z_n$ be a function. Because $q$ is one-to-one on $E$, for each $\ga, \beta \in E$, if $\beta \neq \ga$, then $\ga - \beta \notin \Lambda$. Thus, we can define $\varphi': q(E) \ra \Z_n$ via $\varphi'(\ga + \Lambda) = \varphi(\ga)$ for $\ga \in E$. Since $q(E)$ is $n$-PR, there exists $x \in \Lambda^{\perp} = \Hat{\Gamma / \Lambda}$ such that $\varphi'(\ga + \Lambda) = x(\ga + \Lambda)$ for all $\ga \in E$. As $x \in \Lambda^\perp$, $\varphi(\ga) = \ga(x)$ for all $\ga \in E$. This means $E$ is $n$-PR.

(2)  We first suppose $E$ is an $n$-PR subset of $\Ga$. Let $\varphi: E \ra \Z_n$ be a function. There exists $x \in G$ such that $\varphi(\ga) = \ga(x)$ for all $\ga \in E$. Let $x+ \Lambda^\perp \in G/\Lambda^\perp = \Hat{\Lambda}$. Since $E \subset \Lambda$, for all $\ga \in E$ we have
\ali{\varphi(\ga) =  \ga(x) = (x+ \Lambda^\perp)(\ga).}
This means $E$ is $n$-PR as a subset of $\Lambda$. The proof of the converse part of (2) is similar.
\end{proof}

The following proposition is a stronger version of Lemma 3.2 in \cite{KandR}.

\begin{prop}\label{prop2.2}
Let $E \subset \Ga$. The following are equivalent:

(1)  For all $\varphi: E \ra \BT$ with $\varphi(\ga) \in \range(\ga)$ there exists $x \in G$ such that $\varphi(\ga) = \ga(x)$ for $\ga \in E$.
 
(2)  $E$ is independent. 
\end{prop}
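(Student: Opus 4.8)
The plan is to establish the two implications separately, handling $(1)\Rightarrow(2)$ by contraposition with an explicit construction and reserving the structural work for $(2)\Rightarrow(1)$.

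For $(1)\Rightarrow(2)$, suppose $E$ is not independent. Then there are distinct $\ga_1,\dots,\ga_N\in E$ and integers $m_1,\dots,m_N$ with $\sum_{i=1}^N m_i\ga_i=0$ in $\Ga$ while, after relabelling, $m_1\ga_1\neq 0$. Since $m_1\ga_1\neq 0$, the character $m_1\ga_1$ is non-trivial, so there is $x\in G$ with $\ga_1(x)^{m_1}\neq 1$; hence $c:=\ga_1(x)\in\range(\ga_1)$ satisfies $c^{m_1}\neq 1$. I would then define $\varphi\colon E\to\BT$ by $\varphi(\ga_1)=c$ and $\varphi(\ga)=1$ for all other $\ga\in E$; this respects the constraint $\varphi(\ga)\in\range(\ga)$ since $1$ lies in every range. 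If some $y\in G$ interpolated $\varphi$, then applying the relation would give $1=\big(\sum_i m_i\ga_i\big)(y)=\prod_i\ga_i(y)^{m_i}=\prod_i\varphi(\ga_i)^{m_i}=c^{m_1}\neq 1$, a contradiction. Thus $(1)$ fails, proving $(1)\Rightarrow(2)$.

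For $(2)\Rightarrow(1)$, assume $E$ is independent and fix $\varphi\colon E\to\BT$ with $\varphi(\ga)\in\range(\ga)$. The first step is to pin down $\range(\ga)$: as the continuous image of the compact group $G$, it is a closed subgroup of $\BT$, hence either $\mu_n$ (the $n$-th roots of unity) or all of $\BT$. I would check that $\range(\ga)=\mu_n$ precisely when $\ga$ has order $n$ in $\Ga$, and $\range(\ga)=\BT$ when $\ga$ has infinite order. Consequently the admissibility condition $\varphi(\ga)\in\range(\ga)$ is exactly the statement that $\varphi(\ga)^n=1$ whenever $n\ga=0$ (with no constraint in the infinite-order case), which is precisely what is needed for the assignment $\ga\mapsto\varphi(\ga)$ to extend to a homomorphism on the cyclic group $\langle\ga\rangle$. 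Independence then yields the internal direct sum decomposition $\langle E\rangle=\bigoplus_{\ga\in E}\langle\ga\rangle$, so these per-generator homomorphisms assemble into a single homomorphism $\psi\colon\langle E\rangle\to\BT$ with $\psi(\ga)=\varphi(\ga)$. Finally I would extend $\psi$ to all of $\Ga$ using the divisibility (injectivity) of $\BT$, and invoke Pontryagin duality $\Hat{\Ga}=G$ to realise the extension as evaluation at some $x\in G$, yielding $\ga(x)=\varphi(\ga)$ for every $\ga\in E$.

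I expect the main obstacle to be the conceptual core of $(2)\Rightarrow(1)$: correctly identifying $\range(\ga)$ with $\mu_n$ or $\BT$ and recognising that the hypothesis $\varphi(\ga)\in\range(\ga)$ is exactly the obstruction to extending $\ga\mapsto\varphi(\ga)$ over each cyclic summand. Once that matching is in place, the passage from the local data to a global interpolating point via the direct-sum structure, divisibility of $\BT$, and duality is routine; by contrast, the converse direction is a short explicit construction.
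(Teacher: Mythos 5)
Your argument is correct. The direction $(1)\Rightarrow(2)$ is essentially the paper's own: the same test function supported at $\ga_1$, with value $\ga_1(x)$ chosen so that $\ga_1(x)^{m_1}\neq 1$, and the same contradiction against the relation. For $(2)\Rightarrow(1)$, however, you take a genuinely different route. The paper argues on the group side: after reducing to $\Ga=\ag{E}=\bigoplus_{\ga\in E}\ag{\ga}$ by a \cref{lem2.1}-style argument, it chooses $x_\ga\in\Hat{\ag{\ga}}$ with $\ga(x_\ga)=\varphi(\ga)$, takes the product over $\ga\in E$ when $E$ is finite, and when $E$ is infinite uses compactness of $G$ to pass to a cluster point of the finite products $\prod_{\ga\in F}x_\ga$, which interpolates exactly because each fixed character is eventually constant along the net. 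You argue instead on the character side: you classify $\range(\ga)$ as the $n$-th roots of unity or all of $\BT$ according to the order of $\ga$ (correct: the range is a closed subgroup $\mu_d\subset\BT$ with $\ga^d=1$, forcing $d$ to equal the order of $\ga$), observe that $\varphi(\ga)\in\range(\ga)$ is precisely the solvability condition for extending $\ga\mapsto\varphi(\ga)$ to a homomorphism on the cyclic summand $\ag{\ga}$, assemble these over the direct sum $\ag{E}=\bigoplus_{\ga\in E}\ag{\ga}$ furnished by independence, extend to all of $\Ga$ by divisibility (injectivity) of $\BT$, and conclude by Pontryagin duality, continuity being automatic since $\Ga$ is discrete. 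Your route treats finite and infinite $E$ uniformly and eliminates the cluster-point bookkeeping (in particular the verification that the cluster point interpolates exactly, which the paper leaves to the reader); it also makes explicit the identification of $\range(\ga)$ that the paper uses implicitly when asserting the existence of $x_\ga$. What it costs is an explicit appeal to injectivity of $\BT$ and the duality theorem, but these are the same facts hiding inside the paper's reduction step: surjectivity of the restriction map $G\to\Hat{\ag{E}}$ is exactly divisibility of $\BT$ in dual form.
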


\begin{proof}
Assume $E$ is independent and that $\varphi: E \ra \BT$ satisfies $\varphi(\ga) \in \range(\ga)$ for $\ga \in E$. By similar arguments to the proof of \cref{lem2.1}, if we can find $x \in \Hat{\ag{E}}$ such that $\varphi(\ga) = \ga(x)$ for all $\ga \in E$, then there exists $x' \in G$ such that $\varphi(\ga) = \ga(x')$. Thus we may assume $\Ga = \ag{E} = \bigoplus_{\ga \in E} \ag{\ga}$. For each $\ga \in E$, there exists $x_\ga \in \Hat{\ag{\ga}}$ such that $\ga(x_\ga) = \varphi(\ga)$. If $E$ is finite, we let $x = \prod_{\ga \in E} x_\ga$ and $x$ can interpolate $\varphi$ exactly. For the case that $E$ is infinite, since $G$ is compact, we let $x \in G$ be a cluster point of the following set 
\ali{ \set{\prod_{\ga \in F} x_\ga : F \subset E, |F| < \infty},}
and such an $x$ can interpolate $\varphi$ exactly. 

Conversely, if $E$ is not independent, then there exist $\ga_1,...,\ga_k \in E$ and $m_1,...,m_k \in \Z$ such that $\ga_1^{m_1}...\ga_k^{m_k} = 1$ but $\ga_i^{m_i} \neq 1$ for all $1 \leq i \leq k$. Consider the function $\varphi: E \ra \BT$ such that $\varphi(\ga_i) = 1$ for all $i > 1$, $\varphi(\ga_1)^{m_1} \neq 1$ and $\varphi(\ga) \in \range(\ga)$ for all $\ga \in E$. Notice that such $\varphi$ exists, because $\ga_1^{m_1} \neq 1$ means $\range(\ga_1^{m_1}) \neq \set{1}$ and hence there exists $x \in G$ such that $\ga_1(x)^{m_1}  \neq 1$. Let $\varphi(\ga_1) = \ga_1(x)$.
This function $\varphi$ cannot be interpolated by any $x \in G$. 
\end{proof}

\begin{cor}
Let $N \in \N$ and $E \subset \Ga$. If $E$ is independent and $\Z_N \subset \range(\ga)$ for all $\ga \in E$, then $E$ is $N$-PR.
\end{cor}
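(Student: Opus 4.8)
The plan is to derive this as an immediate consequence of \cref{prop2.2}. To show $E$ is $N$-PR, I would start from an arbitrary function $\varphi: E \ra \Z_N \subset \BT$ and try to produce a single $x \in G$ with $\varphi(\ga) = \ga(x)$ for every $\ga \in E$. The crucial observation is that the hypothesis $\Z_N \subset \range(\ga)$, holding for each $\ga \in E$, guarantees $\varphi(\ga) \in \Z_N \subset \range(\ga)$; that is, $\varphi$ automatically satisfies the range constraint $\varphi(\ga) \in \range(\ga)$ appearing in condition (1) of \cref{prop2.2}. (Here $\Z_N$ is understood as the group of $N$-th roots of unity sitting inside $\BT$.)

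Once this is in place, the rest is a direct appeal: since $E$ is independent, \cref{prop2.2} applies, and the implication (2) $\Ra$ (1) yields an $x \in G$ interpolating $\varphi$ exactly. As $\varphi$ was arbitrary, $E$ is $N$-PR by definition. There is essentially no obstacle beyond verifying that the range condition is met; the only point worth stating explicitly is the containment $\varphi(E) \subseteq \Z_N \subseteq \range(\ga)$, which is precisely what the hypothesis on each $\ga$ provides. In this sense the corollary simply specializes the general interpolation criterion of \cref{prop2.2} to target functions valued in a fixed finite cyclic subgroup of $\BT$.
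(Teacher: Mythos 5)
Your proposal is correct and matches the paper's proof exactly: the paper also derives the corollary directly from \cref{prop2.2}, and your explicit verification that $\varphi(\ga) \in \Z_N \subseteq \range(\ga)$ is just the routine check the paper leaves implicit.
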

\begin{proof}
This follows directly from \cref{prop2.2}.
\end{proof}
 
\begin{thm}\label{thm2.3}
The following are equivalent:

(1) $E \subset \Ga$ is $N$-PR.

(2) If $\ga_i \in E$ for $1 \leq i \leq n$ are distinct and $\prod_{i = 1}^n \ga_i^{m_i} = 1$ for some $m_i \in \Z$, then $N$ divides $m_i$ for all $i$.
\end{thm}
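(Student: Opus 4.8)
The plan is to prove the two implications separately, treating $(2)\Rightarrow(1)$ as the substantive direction and $(1)\Rightarrow(2)$ as an easy contrapositive.

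For $(1)\Rightarrow(2)$ I would argue by contraposition. Suppose the relation condition fails: there are distinct $\ga_1,\dots,\ga_n\in E$ and integers $m_1,\dots,m_n$ with $\prod_{i=1}^n\ga_i^{m_i}=1$, yet $N\nmid m_1$ after relabeling. Put $\zeta=e^{2\pi i/N}$ and define $\varphi\colon E\ra\Z_N$ by $\varphi(\ga_1)=\zeta$ and $\varphi(\ga)=1$ for every other $\ga\in E$. If some $x\in G$ interpolated $\varphi$, then evaluating the relation at $x$ would give $1=(\prod_{i}\ga_i^{m_i})(x)=\prod_i\varphi(\ga_i)^{m_i}=\zeta^{m_1}\neq 1$, a contradiction. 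Hence $E$ is not $N$-PR, which is the contrapositive of $(1)\Rightarrow(2)$.

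For $(2)\Rightarrow(1)$, let $\varphi\colon E\ra\Z_N$ be arbitrary; the goal is to produce $x\in G$ with $\ga(x)=\varphi(\ga)$ on $E$. By \cref{lem2.1}(2) it suffices to find such an $x$ in $\widehat{\ag{E}}$, so I may work entirely inside $\ag{E}$. The idea is to realize $\ag{E}$ as a quotient of a free abelian group and push $\varphi$ down to it. Let $F=\Z^{(E)}$ be the free abelian group on the set $E$, with canonical surjection $\pi\colon F\ra\ag{E}$ sending the basis vector $e_\ga$ to $\ga$, and set $R=\ker\pi$. Define $\tilde\psi\colon F\ra\BT$ on generators by $\tilde\psi(e_\ga)=\varphi(\ga)$. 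For any relation $r=\sum_\ga m_\ga e_\ga\in R$ one has $\prod_\ga\ga^{m_\ga}=1$, so hypothesis (2) forces $N\mid m_\ga$ for every $\ga$; since each $\varphi(\ga)$ is an $N$th root of unity, $\tilde\psi(r)=\prod_\ga\varphi(\ga)^{m_\ga}=1$. Thus $R\subseteq\ker\tilde\psi$, and $\tilde\psi$ descends to a homomorphism $\psi\colon\ag{E}\cong F/R\ra\BT$ with $\psi(\ga)=\varphi(\ga)$. As $\ag{E}$ is discrete, $\psi$ is automatically continuous, i.e.\ $\psi\in\widehat{\ag{E}}$, and $\psi$ interpolates $\varphi$.

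The only genuinely delicate points are bookkeeping ones: that hypothesis (2) is precisely the statement $R\subseteq N\cdot F$, and that reducing from $\Ga$ to $\ag{E}$ is legitimate. The first is immediate once $F$ and $\pi$ are in place; the second is exactly \cref{lem2.1}(2) (alternatively, one could extend $\psi$ from $\ag{E}$ to all of $\Ga$ using the divisibility of $\BT$, as in the proof of \cref{prop2.2}). I expect the main conceptual step to be recognizing that passing to the free group converts the interpolation problem into the purely algebraic condition ``every relation among the $\ga$'s is annihilated by $\varphi$,'' which the $N$th-root-of-unity constraint on $\varphi$ together with $N\mid m_\ga$ renders automatic. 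It is worth noting that, because the values now lie in the finite group $\Z_N$, this descent produces an \emph{exact} interpolating character directly and so avoids the compactness/cluster-point argument that was needed for the infinite case in \cref{prop2.2}.
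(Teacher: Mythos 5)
Your proof is correct, and while the easy direction $(1)\Rightarrow(2)$ is word-for-word the paper's argument (same contrapositive, same test function $\varphi(\ga_1)=e^{2\pi i/N}$, $\varphi\equiv 1$ elsewhere), your proof of $(2)\Rightarrow(1)$ takes a genuinely different route. The paper works \emph{downward}: it quotients $\Ga$ by $E_N=\ag{\ga^N:\ga\in E}$, then verifies three separate facts --- that $\pi(E)$ is independent, that $\pi$ is one-to-one on $E$, and that elements of $\pi(E)$ have order $N$ --- each by a hands-on manipulation of relations of the form $\prod\ga_i^{m_i}=\prod\ga_j^{k_jN}$, and finally invokes \cref{prop2.2} (whose infinite case needs a compactness/cluster-point argument) together with \cref{lem2.1}(1). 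You instead work \emph{upward}: presenting $\ag{E}$ as $F/R$ with $F=\Z^{(E)}$ free abelian, you observe that hypothesis (2) is literally the inclusion $R\subseteq NF$ (the distinctness requirement in (2) is automatically met since relations in $F$ are supported on the basis, indexed by distinct elements of $E$), so the character defined on generators by $\varphi$ kills $R$ and descends to an exact interpolating element of $\widehat{\ag{E}}$, after which \cref{lem2.1}(2) transfers back to $\Ga$. Your route is shorter and more self-contained: it bypasses \cref{prop2.2}, the injectivity and independence verifications, and the cluster-point argument, since $\Z_N$-valued interpolation is achieved in one step by a genuine character. What the paper's route buys in exchange is structural information of independent interest --- it exhibits every $N$-PR set as a lift of an independent set of exact order-$N$ elements in a quotient, a picture that is reused in spirit in the later structure results such as \cref{prop3.2} --- whereas your free-presentation argument, while cleaner for the theorem itself, produces only the character. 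Your parenthetical remark is also accurate: the passage from $\widehat{\ag{E}}$ back to $G$ is exactly the converse half of \cref{lem2.1}(2), equivalently an appeal to the divisibility of $\BT$.
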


\begin{proof}
Suppose (2) fails. Then there exist distinct $\ga_i \in E$ and integers $m_i \in \Z$ with $\prod_{i =1}^n \ga_i^{m_i} = 1$, while $N$ does not divide $m_1$. Let $f: E \ra \Z_N$ be given by $f(\ga_1) = e^{2\pi i /N}$ and $f(\ga) = 1$ for all $\ga \neq \ga_1$. For any $x \in G$, $1 = \prod_{i =1}^n \ga_i(x)^{m_i}$, while $\prod_{i=1}^n f(\ga_i)^{m_i} = f(\ga_1)^{m_1} \neq 1$. Thus, this function $f$ cannot be interpolated by any $x \in G$ and therefore (1) fails.

Conversely, suppose (2) holds. Let $E_N = \ag{\ga^N : \ga \in E}$, the subgroup generated by $\set{\ga^N: \ga \in E}$, and $\pi: \Ga \ra \Ga / E_N$ be the quotient map. Elements in $E_N$ have the form $\ga_1^{k_1N}...\ga_m^{k_mN}$ for $\ga_1,...\ga_m \in E$ and $k_1,...,k_m \in \Z$. We claim that $\pi(E) \subset \pi(\Ga)$ is independent. Indeed, suppose $\ga_i \in E$ with distinct $\pi(\ga_i)$ and $m_i \in \Z$ such that $\prod_{i =1}^n \ga_i^{m_i} \in \ker(\pi)$. Then there exists $k_j \in \Z$, $1 \leq j \leq s$ and $s \geq n$  such that $\prod_{i =1}^n \ga_i^{m_i} = \prod_{j =1}^s \ga_j^{k_j N}$ with distinct $\ga_j \in E$. Hence, $\prod_{i = 1}^n \ga_i^{m_i - k_iN} \prod_{j = n+1}^s \ga_j^{-k_jN} = 1 \in \Ga$. By (2), $N \ | \ m_i - k_iN$ for all $1 \leq i \leq n$ and hence $N \ | \ m_i$ for all $1 \leq i \leq n$. This means each $\ga_i^{m_i} \in \ker(\pi)$ and therefore the set $\pi(E)$ is independent. 

Moreover, we claim that $\pi$ is one-to-one on $E$. Suppose, otherwise, that there are $\ga_1 \neq \ga_2 \in E$ and $\ga_1 \ga_2^{-1} \in \ker(\pi)$. Then there exists $k_j \in \Z$, $1 \leq j \leq s$ and $s \geq 2$  such that $\ga_1 \ga_2^{-1} = \prod_{j =1}^s \ga_j^{k_j N}$ with distinct $\ga_j \in E$. We have $\ga_1^{1 - k_1N} \ga_2^{-1-k_2N} \prod_{j = 3}^s \ga_j^{-k_j N} = 1$. Again, (2) gives $N \ | \ 1 - k_1N$ and $N \ | \ - 1 - k_2N$, which are not possible.

Similar arguments show that elements of $\pi(E)$ have order $N$. Thus $\pi(E)$ is $N$-PR from \cref{prop2.2}. Because $\pi$ is also one-to-one on $E$, \cref{lem2.1} gives that $E$ is $N$-PR, proving (1).
\end{proof}

\begin{cor}\label{cor2.4}
Let $a,b \in \N$ be co-prime. The subset $E \subset \Ga$ is $(ab)$-PR if and only if $E$ is both $a$-PR and $b$-PR.
\end{cor}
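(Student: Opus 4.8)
The plan is to reduce the statement entirely to the divisibility criterion in \cref{thm2.3}, after which it becomes a purely number-theoretic observation about coprime integers. By \cref{thm2.3}, each of the three properties in question — being $(ab)$-PR, being $a$-PR, and being $b$-PR — is equivalent to a statement about the \emph{same} collection of relations among elements of $E$: namely, all relations $\prod_{i=1}^n \ga_i^{m_i} = 1$ with distinct $\ga_i \in E$ and $m_i \in \Z$. The three properties differ only in which divisibility condition they impose on the exponents $m_i$.

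First I would record the three reformulations. Writing $(\star)$ for an arbitrary such relation $\prod_{i=1}^n \ga_i^{m_i} = 1$, \cref{thm2.3} says that $E$ is $(ab)$-PR iff every $(\star)$ forces $ab \mid m_i$ for all $i$; that $E$ is $a$-PR iff every $(\star)$ forces $a \mid m_i$ for all $i$; and likewise with $b$ in place of $a$. Consequently, $E$ is simultaneously $a$-PR and $b$-PR iff every relation $(\star)$ forces both $a \mid m_i$ and $b \mid m_i$ for every $i$.

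The only remaining point is the elementary fact that, because $a$ and $b$ are coprime, $\lcm(a,b) = ab$, so for any integer $m$ one has $ab \mid m$ if and only if $a \mid m$ and $b \mid m$. Applying this to each exponent $m_i$ in each relation $(\star)$ shows that the condition ``$ab \mid m_i$ for all $i$, in every relation'' coincides with the condition ``$a \mid m_i$ and $b \mid m_i$ for all $i$, in every relation.'' Comparing with the reformulations above yields that $E$ is $(ab)$-PR iff $E$ is both $a$-PR and $b$-PR.

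There is essentially no serious obstacle here: the content is already packaged in \cref{thm2.3}, and the coprimality hypothesis enters only through the identity $\lcm(a,b) = ab$. The one point to keep in view is that the quantification over relations $(\star)$ is identical for all three properties, so one need not match up relations between the two directions of the equivalence — \cref{thm2.3} ranges over exactly the same set of relations in each case, and hence no additional argument is required to pass between the coprime-divisibility statement and the PR properties.
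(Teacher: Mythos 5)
Your proof is correct and takes essentially the same approach as the paper: the substantive direction is the identical application of \cref{thm2.3} together with the observation that coprimality upgrades $a \mid m_i$ and $b \mid m_i$ to $ab \mid m_i$. The only (immaterial) difference is that you route the easy direction through \cref{thm2.3} as well, whereas the paper gets it directly from the definition via the inclusions $\Z_a, \Z_b \subset \Z_{ab}$.
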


\begin{proof}
Since $\Z_a, \Z_b \subset \Z_{ab}$, if $E$ is $(ab)$-PR, $E$ is both $a$-PR and $b$-PR. To see the converse, we assume $E$ is both $a$-PR and $b$-PR. Consider $\ga_1,...,\ga_n \in E$ and $m_1,...,m_n \in \Z$ such that $\prod_{i=1}^n \ga_i^{m_i} = 1$. Since $E$ is both $a$-PR and $b$-PR, $a | m_i$ and $b | m_i$ for all $1 \leq i \leq n$. Since $a$ and $b$ are co-prime, $ab | m_i$ for all $1\leq i \leq n$. Hence, by \cref{thm2.3}, $E$ is $(ab)$-PR.
\end{proof}

\begin{cor}\label{cor2.5}
Suppose $E \subset \Ga$ is $N$-PR and $\ag{E} \cap \ag{\ga} = \set{1}$. Then $\ga E$ is also $N$-PR.
\end{cor}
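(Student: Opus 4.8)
The plan is to reduce the whole statement to the algebraic characterization provided by \cref{thm2.3}. Recall that $\ga E = \set{\ga\beta : \beta \in E}$. To prove $\ga E$ is $N$-PR it suffices, by \cref{thm2.3}, to verify the following: whenever $\ga\beta_1, \dots, \ga\beta_n$ are distinct elements of $\ga E$ and $m_1, \dots, m_n \in \Z$ satisfy $\prod_{i=1}^n (\ga\beta_i)^{m_i} = 1$, then $N$ divides every $m_i$. A preliminary observation is that $\ga\beta_i = \ga\beta_j$ if and only if $\beta_i = \beta_j$, so the elements $\ga\beta_i$ are distinct exactly when the $\beta_i \in E$ are distinct; hence I may assume $\beta_1, \dots, \beta_n$ are distinct elements of $E$.

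Next I would expand the relation. Setting $M = \sum_{i=1}^n m_i$ and using commutativity of $\Ga$, the relation $\prod_{i=1}^n (\ga\beta_i)^{m_i} = 1$ becomes
\[
\ga^{M} \prod_{i=1}^n \beta_i^{m_i} = 1,
\]
which rearranges to $\ga^{M} = \prod_{i=1}^n \beta_i^{-m_i}$. This is the heart of the argument: the left-hand side lies in $\ag{\ga}$, while the right-hand side lies in $\ag{E}$. Since the hypothesis gives $\ag{E} \cap \ag{\ga} = \set{1}$, both sides must equal the identity. In particular $\prod_{i=1}^n \beta_i^{m_i} = 1$.

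Finally, since $\beta_1, \dots, \beta_n$ are distinct elements of the $N$-PR set $E$, applying \cref{thm2.3} to $E$ yields $N \mid m_i$ for all $i$, which is precisely the condition needed to conclude, again via \cref{thm2.3}, that $\ga E$ is $N$-PR. I do not expect a genuine obstacle here: the essential point is simply that the coprimality-type hypothesis $\ag{E} \cap \ag{\ga} = \set{1}$ cleanly decouples the $\ga$-part from the $E$-part of any relation, after which the characterization does all the work. The only care required is the routine bookkeeping — transferring distinctness from $\ga\beta_i$ to $\beta_i$, and noting that the separated factor $\prod_{i=1}^n \beta_i^{m_i}$ indeed belongs to $\ag{E}$ by definition.
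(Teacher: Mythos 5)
Your proof is correct and takes essentially the same route as the paper: both arguments split the relation $\prod_{i=1}^n(\ga\beta_i)^{m_i}=1$ into a $\ag{\ga}$-part and an $\ag{E}$-part, use the hypothesis $\ag{E}\cap\ag{\ga}=\set{1}$ to force $\prod_{i=1}^n \beta_i^{m_i}=1$, and then apply \cref{thm2.3} in both directions. Your write-up merely makes explicit the bookkeeping (the factor $\ga^{\sum m_i}$ and the transfer of distinctness from $\ga\beta_i$ to $\beta_i$) that the paper's terser proof leaves implicit.
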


\begin{proof}
Let $\ga_1,...\ga_n \in E$ and $m_1,...,m_i \in \Z$ be such that $\prod_{i=1}^n (\ga\ga_i)^{m_i} = 1$.  Since $\ag{E} \cap \ag{\ga} = \set{1}$, we have $\prod_{i=1}^n \ga_i^{m_i} = 1$ and therefore $N | m_i$ for all $1 \leq i \leq n$. This shows $\ga E$ is $N$-PR by \cref{thm2.3}.
\end{proof}

Next we compare $N$-PR sets with $\e$-Kronecker sets. We first note that $N$-PR sets are weak $\e$-Kronecker sets for appropriate $\e$ depending on $N$.
 
\begin{prop}
$N$-PR sets are weak $\eKG$ for $\e = |1 - e^{\frac{\pi i}{N}}|$.
\end{prop}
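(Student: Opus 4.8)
The plan is to reduce the weak $\eKG$ condition, which asks for approximation of arbitrary $\BT$-valued functions, to the \emph{exact} interpolation guaranteed by the $N$-PR property applied to suitable $\Z_N$-valued functions. The bridge between the two is a purely geometric fact about how densely the $N$-th roots of unity sit inside $\BT$.

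First I would establish the key approximation estimate: for every $z \in \BT$ there is an $N$-th root of unity $\omega \in \Z_N$ with $|z - \omega| \le |1 - e^{\pi i/N}|$. This follows from the observation that the $N$ roots of unity divide the circle into $N$ equal arcs, each of angular width $2\pi/N$, so any $z$ lies at angular distance at most $\pi/N$ from the nearest root. Since the chord joining two points of $\BT$ at angular separation $\theta$ has length $2\sin(\theta/2)$, the worst-case distance is $2\sin(\pi/(2N)) = |1 - e^{\pi i/N}|$, and it is attained exactly at the midpoints of the arcs.

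Granting this estimate, the argument is immediate. Let $E$ be $N$-PR and let $\varphi: E \ra \BT$ be arbitrary. For each $\ga \in E$ I choose, via the estimate above, a value $\psi(\ga) \in \Z_N$ with $|\varphi(\ga) - \psi(\ga)| \le |1 - e^{\pi i/N}|$; this defines a function $\psi: E \ra \Z_N$. Because $E$ is $N$-PR, there exists $x \in G$ with $\psi(\ga) = \ga(x)$ for all $\ga \in E$. Substituting, I obtain $|\varphi(\ga) - \ga(x)| = |\varphi(\ga) - \psi(\ga)| \le |1 - e^{\pi i/N}|$ for every $\ga \in E$, which is precisely the weak $\eKG$ condition with $\e = |1 - e^{\pi i/N}|$.

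There is essentially no obstacle here: the only real content is the elementary geometric estimate, and once that is in place the $N$-PR property does all the work. The one point worth flagging is that the inequality in the weak $\e$-Kronecker definition is non-strict ($\le \e$), which is exactly what is needed, since the worst-case distance to the nearest $N$-th root of unity is achieved and equals $\e$; the strict version would fail precisely at the midpoints of the arcs.
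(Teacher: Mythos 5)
Your proposal is correct and follows essentially the same route as the paper: round the given $\BT$-valued function to the nearest element of $\Z_N$, interpolate the rounded function exactly using the $N$-PR property, and invoke the elementary estimate $|z-\omega|\le 2\sin(\pi/(2N)) = |1-e^{\pi i/N}|$ for the nearest $N$-th root of unity $\omega$ to $z$ (which the paper asserts implicitly and you justify explicitly). Your remark that the non-strict inequality is exactly what is needed, since the bound is attained at arc midpoints, is a correct observation consistent with the paper's use of the \emph{weak} $\e$-Kronecker definition.
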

\begin{proof}
Assume $E \subset \Ga$ is $N$-PR. Let $\varphi: E \ra \BT$. Define $\varphi_N: E \ra \Z_N$ by $\varphi_N(\ga) = t$, where $t \in \Z_N$ satisfies $|t - \varphi(\ga)| \leq |1 - e^{\frac{\pi i}{N}}|$. Let $x \in G$ be such that $\ga(x) = \varphi_N(\ga)$ for $\ga \in E$. We have $|\ga(x) - \varphi(\ga)| \leq |1 - e^{\frac{\pi i}{N}}|$.
\end{proof}

Of course, not every $\e$-Kronecker set in a torsion group is $N$-PR. Here is an example.

\begin{exmp}
For every $\e > 0$ and $N \in \N$, there exists an infinite $\eKG$ set $E$ with every $\ga \in E$ having finite order a multiple of $N$, but $E$ is not $N$-PR.  

Let $N \in \N$ and $\e > 0$. Let $(p_i)_{i = 1}^\infty$ be an increasing sequence of primes coprime to $N$. Let $\Ga = \Z_N \oplus \bigoplus_{i \geq 1} \Z_{p_i}$. Let 
\ali{S := \set{(1,0,0,...), (1, 1, 0,0,...), (1,1,1,0,0,...),...}.}
Each element in $S$ has order a multiple of $N$. It is not hard to see that if we exclude finitely many elements of small orders, we have a co-finite $\eKG$ set $E \subset S$. From \cref{thm2.3} no subsets of $S$ other than singletons are $N$-PR and therefore $E$ is not $N$-PR.  
\end{exmp}

\begin{rem}
Recall that the Bohr compactification of $\Ga$, denoted by $\Bar{\Ga}$, is defined as the dual group of $G_d$, where $G_d$ is the group $G$ equipped with the discrete topology.

Let $\Bar{E}$ be the closure of $E$ in $\Bar{\Ga}$. In  \cite{Harebook}, Ex. 1.4.3, it is shown that there is an $\eKG$ set $E$ and an integer $M \geq 1$ such that $\Bar{(E \cup E^{-1})^M}$ has Haar measure 1. That is not the case for $N$-PR sets. Indeed, if $E$ is any $N$-PR set with $N \geq 3$, then the Haar measure of $\Bar{(E \cup E^{-1})^M}$ is $0$ for all $M \geq 1$. The proof is very similar to the proof of Theorem 3.1 in \cite{CCHare2} and therefore is omitted. 
\end{rem}

\section{Structure of $N$-PR sets}

In this section, we investigate the structure of $N$-PR sets. We rely heavily on the following structure theorem for general abelian groups.

Notation: Let $p$ be a prime number. The group $\CC(p^\infty)$ is the group of all $p^n$-th roots of unity for $n \geq 1$.
 
\begin{thm}\label{thm3.1}\cite{structure}
Every abelian group $\Ga$ is isomorphic to a subgroup of 
\ali{\bigoplus_{\alpha} \Q_\alpha \oplus \bigoplus_{\beta} \CC(p_\beta^\infty),}
where  $\Q_\alpha$ are copies of $\Q$ and $p_\beta$ are prime numbers.
\end{thm}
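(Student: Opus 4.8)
The plan is to recover this classical fact from the structure theory of divisible abelian groups, proceeding in two stages: first embed $\Ga$ into a divisible group, then classify divisible groups. Throughout, recall that over the PID $\Z$ an abelian group is divisible if and only if it is injective as a $\Z$-module, and that injective submodules are direct summands.

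First I would realize $\Ga$ as a quotient $F/R$ of a free abelian group $F = \bigoplus_i \Z$, and use the inclusion $F \hookrightarrow D := \bigoplus_i \Q$ into a divisible overgroup. Since $R \subseteq F \subseteq D$, the induced map $F/R \ra D/R$ has kernel $(F \cap R)/R = 0$, so $\Ga \cong F/R$ embeds into $D' := D/R$; and $D'$ is divisible because quotients of divisible groups are divisible. It therefore suffices to prove that every divisible abelian group is isomorphic to $\bigoplus_\alpha \Q \oplus \bigoplus_\beta \CC(p_\beta^\infty)$.

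Next I would split off torsion. Let $T$ be the torsion subgroup of $D'$. A short check shows $T$ is again divisible (if $nx=0$ and $my=x$ in $D'$, then $nmy=0$, so the divisor $y$ is itself torsion), and because divisible groups are injective the inclusion $T \hookrightarrow D'$ splits, giving $D' = T \oplus V$ with $V$ torsion-free and divisible. A torsion-free divisible group is \emph{uniquely} divisible, hence a $\Q$-vector space, so $V \cong \bigoplus_\alpha \Q$. The torsion group $T$ then decomposes as the direct sum of its $p$-primary components $T_p$, each of which is divisible.

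The remaining and main step is to show a divisible $p$-group is a direct sum of copies of $\CC(p^\infty)$. Here I would view the socle $T_p[p] = \set{x : px = 0}$ as a vector space over $\BF_p$, fix a basis, and use divisibility to lift each basis vector to a compatible system of $p^n$-th roots, each such system generating a copy of $\CC(p^\infty)$. The crux --- and the main obstacle --- is verifying that these Prüfer subgroups are internally independent and together exhaust $T_p$; this is precisely the standard argument that an injective module over $\Z$ is a direct sum of indecomposables, and it is where the genuine work lies. Combining the pieces yields $D' \cong \bigoplus_\alpha \Q \oplus \bigoplus_\beta \CC(p_\beta^\infty)$, and since $\Ga \hookrightarrow D'$ the theorem follows.
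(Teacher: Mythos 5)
The paper states this theorem without proof, simply citing Rotman's book, and your argument is precisely the classical proof found in that reference: embed $\Ga$ in a divisible group via a free presentation $F/R \hookrightarrow (\bigoplus \Q)/R$, split off the torsion subgroup using injectivity of divisible $\Z$-modules, identify the torsion-free summand as a $\Q$-vector space, and decompose each divisible $p$-primary component into copies of $\CC(p^\infty)$. Every step you carry out is correct, and the one step you only sketch (independence and exhaustion of the Prüfer subgroups lifted from an $\mathbb{F}_p$-basis of the socle) is exactly the standard argument you name --- the sum $S$ of those Prüfer subgroups is divisible, hence a summand $T_p = S \oplus W$ with $W$ a $p$-group having trivial socle, forcing $W = 0$ --- so nothing is missing in substance.
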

 
Notation: (1) We let $\Ga_0$ be the torsion subgroup of $\Ga$ and $\pi_0: \Ga \ra \Ga / \Ga_0$ be the quotient map. In the notation of \cref{thm3.1}, $\pi_0: \bigoplus_{\alpha} \Q_\alpha \oplus \bigoplus_{\beta} \CC(p_\beta^\infty) \ra \bigoplus_{\alpha} \Q_\alpha$ is the quotient map.

(2) Let $p \in \N$ be a prime number and $n \in \N$. We let $\Ga_{p^n}$ be the subgroup of $\Ga_0$ containing elements whose orders are not a multiple of $p^n$, or equivalently, whose orders are not divisible by $p^n$. Notice $\Ga_{p^n}$ is indeed a subgroup because if $\ga_1$ and $\ga_2$ have orders $m_1$ and $m_2$, neither a multiple of $p^n$, then $\lcm(m_1, m_2)$, the order of $\ga_1\ga_2$, is not a multiple of $p^n$. Let $\pi_{p^n}: \Ga \ra \Ga / \Ga_{p^n}$ be the quotient map.

(3) For $\ga \in \bigoplus_{\alpha \in A} \Q_\alpha \oplus \bigoplus_{\beta \in B} \CC(p_\beta^\infty)$ and $i \in A \cup B$, we let $\Proj_i(\ga)$ be the $i$-th coordinate of $\ga$. In particular, if $\Ga = \bigoplus_{i} \CC(p_i^\infty)$ with $p_i$ distinct, the map $\pi_{p_i}$ can be viewed as $\pi_{p_i} = \Proj_i$.

\begin{prop}\label{prop3.2}
 Let $E \subset \Ga$, $n \in \N$ and $p$ be a prime number. The following are equivalent:
 
(1) $E$ is $p^n$-PR.

(2) $\pi_{p^{k}}(E)$ is $p^{n+1-k}$-PR and $\pi_{p^k}$ is one-to-one on $E$ for all $1 \leq k \leq n$.

(3) $\pi_{p^{k}}(E)$ is $p^{n+1-k}$-PR and $\pi_{p^k}$ is one-to-one on $E$ for some $1 \leq k \leq n$. 
\end{prop}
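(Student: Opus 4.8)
The plan is to run everything through the algebraic criterion of \cref{thm2.3}: a set is $M$-PR precisely when every relation $\prod_i \ga_i^{m_i}=1$ among \emph{distinct} members forces $M\mid m_i$ for all $i$. Writing $v_p$ for the $p$-adic valuation of the order of a torsion element, I would first record two structural facts about the tower $\Ga_{p}\subset\Ga_{p^2}\subset\cdots$: since $\Ga_{p^{j}}\subset\Ga_{p^{j+1}}$ the map $\pi_{p^{j+1}}$ factors through $\pi_{p^{j}}$, and the kernel of the induced map $\Ga/\Ga_{p^{j}}\to\Ga/\Ga_{p^{j+1}}$, namely $\Ga_{p^{j+1}}/\Ga_{p^{j}}$, has exponent $p$ (raising a torsion element of order not divisible by $p^{j+1}$ to the $p$-th power drops its $p$-valuation below $j$). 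For (1)$\Rightarrow$(2), fix $1\le k\le n$. If $\ga\neq\ga'$ in $E$ had $\pi_{p^k}(\ga)=\pi_{p^k}(\ga')$, then $\ga\ga'^{-1}\in\Ga_{p^k}$ would be torsion of some order $d$ with $p^k\nmid d$, and $(\ga\ga'^{-1})^d=1$ would force $p^n\mid d$ by \cref{thm2.3}, contradicting $k\le n$; so $\pi_{p^k}$ is one-to-one on $E$. For the strength, a relation $\prod\pi_{p^k}(\ga_i)^{m_i}=1$ among distinct images means $g:=\prod\ga_i^{m_i}\in\Ga_{p^k}$, so $g^d=1$ with $v_p(d)\le k-1$, and applying \cref{thm2.3} to $\prod\ga_i^{dm_i}=1$ gives $p^n\mid dm_i$, hence $v_p(m_i)\ge n+1-k$. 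Finally (2)$\Rightarrow$(3) is immediate since $n\ge 1$.

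For (3)$\Rightarrow$(1) I would climb the tower in both directions. \emph{Upward:} assume $\pi_{p^{j}}$ is one-to-one on $E$ and $\pi_{p^{j}}(E)$ is $p^{s}$-PR with $s\ge 2$. If two members merged under $\Ga/\Ga_{p^{j}}\to\Ga/\Ga_{p^{j+1}}$, their quotient would be a nontrivial element of the exponent-$p$ kernel, yielding a relation whose $p$-th power is trivial and so forcing $p^{s}\mid p$ by \cref{thm2.3}, impossible; thus $\pi_{p^{j+1}}$ is one-to-one on $E$. Moreover any relation $\prod\pi_{p^{j+1}}(\ga_i)^{m_i}=1$ pulls back to $\prod\pi_{p^{j}}(\ga_i)^{pm_i}=1$, whence $p^{s}\mid pm_i$ and $p^{s-1}\mid m_i$, so $\pi_{p^{j+1}}(E)$ is $p^{s-1}$-PR. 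Starting from the hypothesised $k$ (where $s=n+1-k$) and iterating up to $j=n$ produces statement (3) at level $n$: $\pi_{p^n}$ is one-to-one on $E$ and $\pi_{p^n}(E)$ is $p$-PR.

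\emph{Downward:} using the global map $\pi_{p^n}$ just secured, I would show that (3) at level $j$ implies (3) at level $j-1$ for $2\le j\le n$. One-to-oneness of $\pi_{p^{j-1}}$ is automatic from that of $\pi_{p^{j}}$. For the strength, take distinct $\pi_{p^{j-1}}(\ga_i)$ with $g:=\prod\ga_i^{m_i}\in\Ga_{p^{j-1}}$; applying $\pi_{p^{j}}$ gives $p^{n+1-j}\mid m_i$, so writing $m_i=p^{n+1-j}l_i$ and $h=\prod\ga_i^{l_i}$ one computes that $h^{p^{\,n-1}}=g^{p^{\,j-2}}$ has order coprime to $p$, whence $v_p(\mathrm{ord}\,h)\le n-1$ and $h\in\Ga_{p^n}$. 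Therefore $\prod\pi_{p^n}(\ga_i)^{l_i}=\pi_{p^n}(h)=1$, and since $\pi_{p^n}(E)$ is $p$-PR this forces $p\mid l_i$, i.e. $p^{n+2-j}\mid m_i$; thus $\pi_{p^{j-1}}(E)$ is $p^{n+2-j}$-PR. Descending from level $n$ to level $1$ yields that $\pi_{p}$ is one-to-one on $E$ and $\pi_{p}(E)$ is $p^{n}$-PR, and \cref{lem2.1} then gives that $E$ is $p^n$-PR, closing the cycle.

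The delicate step, and the one I expect to require the most care, is the downward pass. It is \emph{not} true in general that injecting a set through a quotient by an exponent-$p$ group together with $p^{n+1-j}$-PR of the image upgrades it to $p^{n+2-j}$-PR; small counterexamples already live inside $\Z_p\oplus\Z_p$. What rescues the argument is the specifically torsion nature of the kernels $\Ga_{p^{j}}/\Ga_{p^{j-1}}$, which is what makes the auxiliary element $h$ have bounded $p$-valuation and hence die under the single top map $\pi_{p^n}$. Establishing that top map first (by the upward climb) and only then descending is therefore essential, and the bookkeeping of $p$-valuations of orders of the elements $g$, $h$ is where I would be most careful; a clean alternative I would keep in reserve is to first quotient by $\Ga_p$ (legitimate once (1)$\Leftrightarrow$(3) at $k=1$ is in hand via \cref{lem2.1}) so that all torsion is $p$-primary and $\Ga_{p^{j}}$ becomes the $p^{\,j-1}$-torsion, simplifying every order computation above.
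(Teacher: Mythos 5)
Your proof is correct, and for the hard implication (3)$\Rightarrow$(1) it takes a genuinely different route from the paper's. Your (1)$\Rightarrow$(2) argument coincides with the paper's (injectivity via the order of $\ga_1\ga_2^{-1}$, strength via multiplying the relation through by the order $l$ of the product, with the $p$-adic valuation of $l$ at most $k-1$). For (3)$\Rightarrow$(1), the paper never leaves the single quotient $\pi_{p^k}$: from $p^{n+1-k}\mid m_i$ it forms the auxiliary product $\prod_i\ga_i^{m_i/p^t}$ (with $t=k-1$ or $t=n+1-k$ according to whether $n+1-k\ge k$), observes that its order is a small power of $p$ so it lies in $\Ga_{p^k}$, pushes it back through $\pi_{p^k}$, and iterates this self-strengthening loop until $p^n\mid m_i$, concluding via \cref{thm2.3} directly in $\Ga$ without ever invoking \cref{lem2.1}. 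You instead traverse the whole tower: an upward climb through the exponent-$p$ kernels $\Ga_{p^{j+1}}/\Ga_{p^j}$ to secure injectivity of $\pi_{p^n}$ on $E$ and $p$-PR of $\pi_{p^n}(E)$ (an idea with no counterpart in the paper), then a level-by-level descent in which the same small-order trick --- your element $h$ whose order has $p$-adic valuation at most $n-1$ --- is always discharged through the one fixed map $\pi_{p^n}$, finishing with \cref{lem2.1}. I checked the delicate points: the kernel $\Ga_{p^{j+1}}/\Ga_{p^j}$ does have exponent $p$, the climb only needs $s\ge 2$ exactly at the steps where it is available, injectivity at level $n$ propagates to all lower levels since each $\pi_{p^{j'}}$ is a factor of $\pi_{p^n}$, and the identity $h^{p^{n-1}}=g^{p^{j-2}}$ with $g\in\Ga_{p^{j-1}}$ correctly bounds the valuation of $\mathrm{ord}(h)$. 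Both proofs run on the same engine (divide the exponents, bound the $p$-part of the order of the resulting product, land in some $\Ga_{p^m}$, reapply \cref{thm2.3}); what your version buys is a direct proof of (3)$\Rightarrow$(2) --- the full ladder of statements at every level, which the paper obtains only by passing through (1) --- at the cost of length. In fact, once your upward climb is done you could skip the descent entirely: given $\prod_i\ga_i^{m_i}=1$ in $\Ga$ with distinct $\ga_i\in E$ and $p^r\mid m_i$ for some $0\le r\le n-1$, the element $h_r=\prod_i\ga_i^{m_i/p^r}$ satisfies $h_r^{p^r}=1$, so $h_r\in\Ga_{p^n}$, and pushing through $\pi_{p^n}$ gives $p^{r+1}\mid m_i$; iterating from $r=0$ to $r=n-1$ yields $p^n\mid m_i$ and hence (1) by \cref{thm2.3} --- essentially the paper's loop transplanted to level $n$, which would shorten your argument considerably.
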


\begin{proof}
We first show (1) implies (2). Fix $1 \leq k \leq n$ and we suppose $E \subset \Ga$ is $p^n$-PR.  We first claim that $\pi_{p^k}$ is one-to-one on $E$. This is because if $\ga_1, \ga_2 \in E$ have $\ga_1\ga_2^{-1} \in \Ga_{p^k}$, then $\ga_1\ga_2^{-1}$ has finite order that is not divisible by $p^k$. But from \cref{thm2.3}, this implies $E$ is not even $p^k$-PR and therefore contradicts that $E$ is $p^n$-PR. Next, we show that $\pi_{p^k}(E)$ is $p^{n+1-k}$-PR. We let $\ga_1,...,\ga_s \in E$ and $\beta_i := \pi_{p^k}(\ga_i)$ for $1 \leq i \leq s$. Suppose for some $m_1,...,m_s \in \Z$ we have $\prod_{i=1}^s \beta_i^{m_i} = 1$. This means $\prod_{i=1}^s \ga_i^{m_i} \in \Ga_{p^k}$ and therefore there exists $l \in \N$, which is not divisible by $p^k$, such that $\prod_{i=1}^s \ga_i^{lm_i} = 1$. By \cref{thm2.3}, $p^n | lm_i$ and hence $p^{n+1-k} | m_i$. \cref{thm2.3} implies $\pi_{p^k}(E)$ is $p^{n+1-k}$-PR.

 Since (2) implies (3) is obvious, it remains to show (3)  implies (1). We assume $(3)$ holds for some $1 \leq k \leq n$. Let $\ga_1,...,\ga_s \in E$ be distinct and $m_1,...,m_s \in \Z$ such that $\prod_{i=1}^s \ga_i^{m_i} = 1$. Then $\prod_{i=1}^s \pi_{p^k}(\ga_i)^{m_i} = 1$ and the injectivity of $\pi_{p^k}$ implies $\pi_{p^k}(\ga_i)$ are distinct. Since $\pi_{p^k}(E)$ is $p^{n+1-k}$-PR, we have $p^{n+1-k} | m_i$ for all $1 \leq i \leq s$. 

If $n+1-k \geq k$, we note that order of the product $\prod_{i=1}^s \ga_i^{m_i/p^{k-1}}$ divides $p^{k-1}$ and therefore $\prod_{i=1}^k \ga_i^{m_i/p^{k-1}} \in \Ga_{p^k}$. This means $\prod_{i=1}^k \pi_{p^k}(\ga_i)^{m_i/p^{k-1}} = 1 \in \Ga/\Ga_{p^k}$. Hence, by \cref{thm2.3}, we have $p^{n+1-k}$ divides $m_i/p^{k-1}$ and this gives $p^n$ divides $m_i$ for all $1 \leq i \leq s$. \cref{thm2.3} gives (1).

Otherwise, we have $n+1-k < k$. Notice that the order of the product $\prod_{i=1}^s \ga_i^{m_i/p^{n+1-k}}$ divides $p^{n+1-k}$. Since $n+1-k < k$, $\prod_{i=1}^k \ga_i^{m_i/p^{n+1-k}} \in \Ga_{p^k}$. As above, we have $p^{n+1-k}$ divides $m_i/p^{n+1-k}$, which means $p^{2(n+1-k)} | m_i$. We continue doing this until we reach $r(n+1-k) \geq k$ for some $r \in \N$. The previous case gives (1).
\end{proof}

\begin{cor}\label{cor3.3}
Suppose $\Ga = \bigoplus \CC(p^\infty)$. Then $p^n$-PR subsets in $\bigoplus \CC(p^\infty)$ are in correspondence with $p$-PR subsets in $ \bigoplus \CC(p^\infty)$ in the following manner: Given a set $E \subset \bigoplus \CC(p^\infty)$, we let $E_{p^n} := \set{\ga^{p^{n-1}} : \ga \in E}$ and $E_{1/p^n} := \set{\xi_\ga : \ga \in E}$, where $\xi_\ga^{p^{n-1}} = \ga$. If $E$ is $p^n$-PR then $E_{p^n}$ is $p$-PR and $|E_{p^n}| = |E|$. If $E$ is $p$-PR then $E_{1/p^n}$ is $p^n$-PR and $|E| = |E_{1/p^n}|$.
\end{cor}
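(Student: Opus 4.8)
The plan is to read everything off the algebraic criterion of \cref{thm2.3}, handling the two operations $\ga \mapsto \ga^{p^{n-1}}$ and $\ga \mapsto \xi_\ga$ in turn. First I would note that the roots $\xi_\ga$ exist at all: each $\CC(p^\infty)$ is divisible, hence so is $\bigoplus \CC(p^\infty)$, so every $\ga$ admits a $p^{n-1}$-th root. The forward direction is then quick. If $E$ is $p^n$-PR and $\ga_1 \neq \ga_2$ in $E$ satisfied $\ga_1^{p^{n-1}} = \ga_2^{p^{n-1}}$, then $\ga_1^{p^{n-1}}\ga_2^{-p^{n-1}} = 1$ would force $p^n \mid p^{n-1}$ by \cref{thm2.3}, which is impossible; hence $\ga \mapsto \ga^{p^{n-1}}$ is injective on $E$ and $|E_{p^n}| = |E|$. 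To see $E_{p^n}$ is $p$-PR, take distinct elements of $E_{p^n}$, write them as $\ga_i^{p^{n-1}}$ (with the $\ga_i$ distinct by injectivity), and suppose $\prod_i (\ga_i^{p^{n-1}})^{m_i} = 1$; then $\prod_i \ga_i^{p^{n-1}m_i} = 1$, so $p^n \mid p^{n-1}m_i$, i.e. $p \mid m_i$ for all $i$, and \cref{thm2.3} applies.

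The reverse direction is the heart of the argument, and I would prove it by peeling off one factor of $p$ at a time. Assume $E$ is $p$-PR. Injectivity of $\ga \mapsto \xi_\ga$ is automatic (if $\xi_{\ga_1} = \xi_{\ga_2}$ then $\ga_1 = \xi_{\ga_1}^{p^{n-1}} = \xi_{\ga_2}^{p^{n-1}} = \ga_2$), giving $|E_{1/p^n}| = |E|$. For the $p^n$-PR property, let distinct $\xi_{\ga_i}$ satisfy $\prod_i \xi_{\ga_i}^{m_i} = 1$, and show by induction on $j = 0, 1, \dots, n$ that $p^j \mid m_i$ for every $i$. The base case $j = 0$ is vacuous. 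For the step, assume $p^j \mid m_i$ with $j \leq n-1$, write $m_i = p^j \ell_i$, and raise the relation $\prod_i \xi_{\ga_i}^{p^j \ell_i} = 1$ to the power $p^{\,n-1-j}$. Since $\xi_{\ga_i}^{p^{n-1}} = \ga_i$, the left side collapses to $\prod_i \ga_i^{\ell_i} = 1$; as $E$ is $p$-PR and the $\ga_i$ are distinct, \cref{thm2.3} yields $p \mid \ell_i$, that is $p^{j+1} \mid m_i$. At $j = n$ this gives $p^n \mid m_i$, so $E_{1/p^n}$ is $p^n$-PR.

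The conceptual obstacle is that the hypothesis supplies divisibility by only a single factor of $p$, whereas the conclusion demands $p^n$. The induction above bridges this gap by exploiting that $\xi_{\ga_i}^{p^j}$ is itself a $p^{\,n-1-j}$-th root of $\ga_i$: after dividing out $p^j$ one raises the surviving relation only to the power $p^{\,n-1-j}$, which returns a relation among the $\ga_i$ and so feeds the $p$-PR hypothesis back in. The only care needed is that this exponent stay nonnegative, which holds precisely because the induction halts at $j = n-1$. Everything else---existence of the roots, the distinctness required to invoke \cref{thm2.3} (preserved since both maps are injective), and the two cardinality equalities---is routine, and the term ``correspondence'' is justified by the identity $(E_{1/p^n})_{p^n} = E$, so that the two operations are mutually inverse up to the choice of roots.
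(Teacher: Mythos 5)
Your proof is correct, but it takes a genuinely different (more self-contained) route than the paper's. The paper disposes of this corollary in one line: since every element of $\bigoplus \CC(p^\infty)$ has order a power of $p$, the subgroup $\Ga_{p^n}$ is exactly the kernel of $\ga \mapsto \ga^{p^{n-1}}$, and by divisibility the quotient map $\pi_{p^n}$ may be identified with this power map; the corollary is then just the case $k=n$ of \cref{prop3.2}, whose equivalence of (1) and (3) delivers both directions at once (for the reverse one applies (3)$\Rightarrow$(1) to $E_{1/p^n}$, noting $\pi_{p^n}(E_{1/p^n}) = E$ and injectivity of the root map). You never invoke \cref{prop3.2} and instead verify everything directly from \cref{thm2.3}. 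Your forward direction reproduces what \cref{prop3.2} does in this special case, and your induction peeling off one factor of $p$ at a time is essentially the bootstrapping that occurs inside the paper's proof of (3)$\Rightarrow$(1) in \cref{prop3.2} (the iteration ``we continue doing this until we reach $r(n+1-k)\geq k$''), transplanted from quotient maps to chosen $p^{n-1}$-th roots; your exponent bookkeeping ($j \leq n-1$ keeps $p^{\,n-1-j}$ a nonnegative power) is sound, and distinctness of the $\ga_i$ needed for \cref{thm2.3} does follow from distinctness of the $\xi_{\ga_i}$. What the paper's route buys is brevity and a conceptual reading of the ``correspondence'' as induced by a quotient, with \cref{lem2.1} implicitly handling the interpolation bookkeeping; what yours buys is an elementary argument that also makes explicit two points the paper leaves tacit, namely the existence of the roots $\xi_\ga$ (divisibility of the Pr\"ufer groups) and the mutual inverseness $(E_{1/p^n})_{p^n} = E$ justifying the word ``correspondence.''
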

\begin{proof}
Since on $\bigoplus \CC(p^\infty)$, the map $\pi_{p^n}$ can be identified as $\pi_{p^n}(\ga) = \ga^{p^{n-1}}$, this follows from \cref{prop3.2}.
\end{proof}

\begin{cor}\label{cor3.4}
Let $N = p_1^{n_1}...p_k^{n_k}$ for distinct primes $p_i$ and $n_i \in \N$. Then $E$ is $N$-PR if and only if $\pi_{p_i^{n_i}}(E)$ is $p_i$-PR and $\pi_{p_i^{n_i}}$ is one-to-one on $E$ for all $1 \leq i \leq k$.
\end{cor}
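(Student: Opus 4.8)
The plan is to combine \cref{cor2.4} with a well-chosen instance of \cref{prop3.2}. Since $N = p_1^{n_1}\cdots p_k^{n_k}$ expresses $N$ as a product of the pairwise coprime prime powers $p_i^{n_i}$, I would first use \cref{cor2.4} to reduce the property of being $N$-PR to the conjunction of the properties of being $p_i^{n_i}$-PR. As \cref{cor2.4} is stated only for a product of two coprime factors, this reduction proceeds by induction on the number $k$ of distinct primes: I would factor $N = p_1^{n_1}\cdot(p_2^{n_2}\cdots p_k^{n_k})$, note that the two factors are coprime, apply \cref{cor2.4} once to split off $p_1^{n_1}$, and then invoke the inductive hypothesis on the remaining factor. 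This step establishes that $E$ is $N$-PR if and only if $E$ is $p_i^{n_i}$-PR for every $1 \le i \le k$.

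Next I would translate each individual statement ``$E$ is $p_i^{n_i}$-PR'' using \cref{prop3.2} with $p = p_i$ and $n = n_i$. The equivalence of (1) and (3) there allows me to verify condition (3) for a single value of its internal index, and I would choose that index to equal $n_i$. At this choice the relevant exponent becomes $n_i + 1 - n_i = 1$, so condition (3) asserts precisely that $\pi_{p_i^{n_i}}(E)$ is $p_i$-PR and that $\pi_{p_i^{n_i}}$ is one-to-one on $E$. Hence $E$ is $p_i^{n_i}$-PR if and only if these two statements hold.

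Chaining the two equivalences gives the corollary directly: $E$ is $N$-PR, iff $E$ is $p_i^{n_i}$-PR for all $i$, iff $\pi_{p_i^{n_i}}(E)$ is $p_i$-PR and $\pi_{p_i^{n_i}}$ is one-to-one on $E$ for all $i$. The proof is essentially a bookkeeping assembly of the two earlier results, so I do not expect a genuine obstacle. The only point that needs a little care is the induction in the first step: one must confirm that the prime powers $p_i^{n_i}$ are indeed pairwise coprime, so that \cref{cor2.4} legitimately applies at each stage of splitting off one factor.
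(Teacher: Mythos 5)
Your proposal is correct and follows essentially the same route as the paper, whose proof is simply the one-line observation that the statement follows from \cref{cor2.4} and \cref{prop3.2}; your write-up just makes explicit the two details the paper leaves implicit (the induction extending \cref{cor2.4} to $k$ pairwise coprime factors, and the choice of internal index $k=n_i$ in \cref{prop3.2}, where the forward direction uses (1)$\Rightarrow$(2) to hit that specific index and the backward direction uses the ``for some'' form (3)$\Rightarrow$(1)). No gaps.
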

\begin{proof}
This follows from \cref{cor2.4} and \cref{prop3.2}.
\end{proof}

We thus have the following result about the structure of $N$-PR sets in the torsion subgroup.

\begin{prop}\label{prop3.5}
Let $E \subset \Ga_0$ be an $N$-PR set and $N = p_1^{n_1}...p_k^{n_k}$ where the $p_i$ are distinct prime numbers. There exist $p_i^{n_i}$-PR sets $E_i \subset \bigoplus \CC(p_i^\infty)$ for $1 \leq i \leq k$, and bijections $f_2: E_1 \ra E_2$, ..., $f_{k}: E_1 \ra E_k$ such that we can represent $E$ as
\ali{E = \set{\ga \oplus f_2(\ga) \oplus ... \oplus f_{k}(\ga) \oplus \beta_\ga : \ga \in E_1}}
for some $\beta_\ga \in \bigoplus_{p \neq p_i \ \forall 1 \leq i \leq k} \CC(p^\infty)$.
\end{prop}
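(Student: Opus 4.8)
The plan is to use the primary decomposition of $\Ga_0$ afforded by the embedding of \cref{thm3.1} to split each element of $E$ into its $p_i$-components and a remainder, and then to recover the relevant PR-properties of the pieces from the algebraic criterion of \cref{thm2.3}. Fix an embedding $\Ga \hookrightarrow \bigoplus_\alpha \Q_\alpha \oplus \bigoplus_\beta \CC(p_\beta^\infty)$. Since $E \subset \Ga_0$, every $e \in E$ has trivial $\Q$-component and so lies in the torsion part $\bigoplus_\beta \CC(p_\beta^\infty)$. Grouping the summands by their underlying prime, let $P_i$ be the projection onto the coordinates attached to $p_i$ (so $P_i(\Ga_0) \subset \bigoplus \CC(p_i^\infty)$) for $1 \leq i \leq k$, and let $Q$ be the projection onto the coordinates attached to primes outside $\set{p_1, \dots, p_k}$. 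These are homomorphisms, and $e = P_1(e) \oplus \cdots \oplus P_k(e) \oplus Q(e)$ for every $e$.

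First I would show that each $P_i$ is one-to-one on $E$. If $e \neq e'$ in $E$ satisfied $P_i(e) = P_i(e')$, then the $p_i$-component of $e(e')^{-1}$ would vanish, so the order $l$ of $e(e')^{-1}$ would be coprime to $p_i$; raising to the power $l$ gives the relation $e^l (e')^{-l} = 1$ between distinct elements of $E$, and \cref{thm2.3} would force $N \mid l$, contradicting $p_i \mid N$. (Alternatively, this injectivity is exactly the one-to-one-ness of $\pi_{p_i^{n_i}}$ on $E$ recorded in \cref{cor3.4}.)

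With injectivity in hand, set $E_i := P_i(E) \subset \bigoplus \CC(p_i^\infty)$ and let $\iota : E_1 \to E$ be the inverse of the bijection $P_1 : E \to E_1$. For $\ga \in E_1$ I define $f_i(\ga) := P_i(\iota(\ga))$ for $2 \leq i \leq k$ and $\beta_\ga := Q(\iota(\ga))$; injectivity of the $P_i$ makes each $f_i$ a bijection of $E_1$ onto $E_i$, and unwinding the definitions gives $\iota(\ga) = \ga \oplus f_2(\ga) \oplus \cdots \oplus f_k(\ga) \oplus \beta_\ga$, so that $E = \set{\ga \oplus f_2(\ga) \oplus \cdots \oplus f_k(\ga) \oplus \beta_\ga : \ga \in E_1}$, as required.

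The last and most delicate step is to verify that each $E_i$ is $p_i^{n_i}$-PR, for which I would again appeal to \cref{thm2.3}. Given distinct $\eta_1, \dots, \eta_s \in E_i$ with $\prod_{j} \eta_j^{m_j} = 1$, write $\eta_j = P_i(e_j)$ with the $e_j \in E$ distinct (since $P_i$ is injective on $E$); then $g := \prod_j e_j^{m_j}$ has trivial $p_i$-component, so its order $l$ is coprime to $p_i$, and $\prod_j e_j^{l m_j} = g^l = 1$. Since $E$ is $N$-PR, \cref{thm2.3} yields $N \mid l m_j$, hence $p_i^{n_i} \mid l m_j$, and coprimality of $l$ with $p_i$ gives $p_i^{n_i} \mid m_j$; \cref{thm2.3} then certifies that $E_i$ is $p_i^{n_i}$-PR. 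The main obstacle throughout is keeping the order-and-coprimality bookkeeping honest: the whole argument rests on the fact that clearing the prime-to-$p_i$ part of any relation converts the abstract $N$-PR property of $E$ into the $p_i^{n_i}$-PR property of the projection $E_i$, and dually that a nontrivial prime-to-$p_i$ relation between two points of $E$ cannot survive $N$-PR-ness.
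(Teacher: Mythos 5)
Your proposal is correct and follows essentially the same route as the paper: project $E$ onto its $p_i$-primary components, use the injectivity of these projections on $E$ (forced by the $N$-PR property) to define the bijections $f_i$, and absorb the remaining coordinates into $\beta_\ga$. The only difference is that you re-derive the injectivity and the $p_i^{n_i}$-PR property of $E_i$ directly from \cref{thm2.3} via the trick of raising a relation to a power $l$ coprime to $p_i$, whereas the paper obtains both by citing \cref{prop3.2} (whose proof is that same computation), so your write-up is merely a self-contained inlining of the paper's argument.
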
 
\begin{proof}
For $1 \leq i \leq k$, we let $E_i := \pi_{p_i}(E)$. Since $E$ is $N$-PR, from \cref{prop3.2}, each $E_i$ is $p_i^{n_i}$-PR and the maps $\pi_{p_i}: E \ra E_i$ are injective. For $2 \leq i \leq k$, we define $f_i$ on $E_1$ as $f_i(\pi_{p_1}(\ga)) := \pi_{p_i}(\ga)$ for $\ga \in E$. The injectivity of $\pi_{p_1}$ on $E$ ensures the maps are well-defined. Moreover, the injectivity of $\pi_{p_i}$ implies $f_i$ is injective. Each $f_i$ is clearly surjective by its construction and therefore a bijection. Since each $\ga \in E$ can be represented as 
\ali{\ga = \oplus_{i=1}^k \pi_{p_i}(\ga) \oplus \beta_\ga}
for some $\beta_\ga \in  \bigoplus_{p \neq p_i \ \forall 1 \leq i \leq k} \CC(p^\infty)$,  \cref{prop3.5} follows.
\end{proof}

Next, we will discuss the relation between $p$-PR sets and independent sets. 
 
We first note that not every $p$-PR set is an independent set. The following example shows a $p$-PR set whose only independent subsets are singletons.

\begin{exmp}
Consider $E = \set{\ga_n : n \geq 2} \subseteq \bigoplus_{i \geq 1} \CC(p^\infty)$, where $\Proj_1(\ga_n) = 1/p^2$, $\Proj_n(\ga_n) = 1/p$ and $\Proj_k(\ga_n) = 0$ for all $k \neq 1, n$. Here we use additive group operation. $E$ is $p$-PR by \cref{thm2.3}, but $E$ does not contain any independent subsets other than singletons because for all $i \neq j$, $p\ga_i = p\ga_j \neq 1$.
\end{exmp}

Similar to independent sets, we have the following result about the maximum size of a $p$-PR set inside a product group.

\begin{prop}\label{prop4.2}
Suppose $E \subseteq \bigoplus_{i \in B_1} \Q \oplus \bigoplus_{i \in B_2} \CC(p^\infty)$ is $p$-PR. Then $|E| \leq |B_1|+|B_2|$.
\end{prop}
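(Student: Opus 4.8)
The plan is to reduce the statement to a finite-dimensional linear-algebra computation on the relation lattice of finitely many points of $E$. First I would dispose of the case where $|B_1|+|B_2|$ is infinite: in that case the ambient group $\bigoplus_{i \in B_1}\Q \oplus \bigoplus_{i\in B_2}\CC(p^\infty)$ has cardinality exactly $|B_1|+|B_2|$, so the bound $|E| \le |B_1|+|B_2|$ is automatic. Hence I may assume $B_1$ and $B_2$ are finite and set $d := |B_1|+|B_2|$. Arguing by contradiction, I would suppose $|E| \ge d+1$ and fix distinct $\ga_1,\dots,\ga_{d+1} \in E$.

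Next I would study the lattice of relations $R := \set{(m_1,\dots,m_{d+1}) \in \Z^{d+1} : \prod_{i=1}^{d+1}\ga_i^{m_i}=1}$, which is the kernel of the surjection $\Z^{d+1} \ra \Lambda := \ag{\ga_1,\dots,\ga_{d+1}}$ carrying the standard basis to the $\ga_i$. Tensoring the exact sequence $0 \ra R \ra \Z^{d+1} \ra \Lambda \ra 0$ with $\Q$ and using that $\CC(p^\infty)\otimes \Q = 0$ shows $\rank \Lambda \le |B_1|$, whence $\rank R = (d+1) - \rank \Lambda \ge |B_2|+1$. In particular $R$ is a free abelian group of rank at least $|B_2|+1$.

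The crucial input is that $E$ is $p$-PR: by \cref{thm2.3}, every relation among the distinct $\ga_i$ has all exponents divisible by $p$, that is, $R \subseteq p\Z^{d+1}$. Putting $R$ into Smith normal form relative to $\Z^{d+1}$, I would obtain a basis $f_1,\dots,f_{d+1}$ of $\Z^{d+1}$ and invariant factors $a_1 \mid \dots \mid a_\rho$ (with $\rho = \rank R$) such that $a_1 f_1,\dots,a_\rho f_\rho$ is a basis of $R$; since each $a_j f_j \in p\Z^{d+1}$ and the $f_j$ are primitive, every $a_j$ is divisible by $p$. Consequently the torsion subgroup $T$ of $\Lambda = \Z^{d+1}/R$ is $\bigoplus_{j=1}^{\rho}\Z/a_j\Z$ with each $a_j$ a multiple of $p$, so its $p$-torsion $T[p]$ is an $\BF_p$-vector space of dimension $\rho \ge |B_2|+1$. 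On the other hand $T$ embeds in the torsion subgroup $\bigoplus_{i \in B_2}\CC(p^\infty)$ of the ambient group, whose $p$-torsion is $\bigoplus_{i\in B_2}\Z_p \cong \BF_p^{|B_2|}$; hence $\dim_{\BF_p}T[p] \le |B_2|$, a contradiction.

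I expect the main obstacle to be the bookkeeping in the last step: pinning down that the $p$-PR condition forces every invariant factor of $R$ to be divisible by $p$, and thereby matching the rank of $R$ (a count of independent relations) against the number of $p$-torsion generators available inside $\bigoplus_{i \in B_2}\CC(p^\infty)$ (which is capped by $|B_2|$). Everything else is a routine rank count, and I would note that the same contradiction can be packaged without Smith normal form by setting $R' := \tfrac1p R \subseteq \Z^{d+1}$ and observing $R'/R \cong T[p]$.
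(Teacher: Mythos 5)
Your proof is correct, and it takes a genuinely different route from the paper. The paper's proof works with a fixed additive identification of $\CC(p^\infty)$ as rationals with $p$-power denominators, views $E$ inside the rational vector space $\Q^{(B_1\cup B_2)}$, and proves the single stronger claim that $E$ is \emph{linearly independent} there: an exact integer relation $\sum a_i\ga_i = 0$ among the representatives yields a group relation, so \cref{thm2.3} forces $p \mid a_i$ for all $i$, and dividing by $p$ preserves the exact relation — infinite descent then kills every nonzero relation, and the cardinality bound follows from the dimension count, uniformly in finite and infinite $B_1, B_2$. You instead dispose of the infinite case by pure cardinality, and in the finite case run a rank-and-torsion count on the relation lattice $R$ of $d+1$ chosen points: tensoring with $\Q$ caps the free rank of $\Lambda = \ag{\ga_1,\dots,\ga_{d+1}}$ by $|B_1|$, forcing $\rank R \geq |B_2|+1$; the $p$-PR hypothesis enters exactly once, as $R \subseteq p\Z^{d+1}$, which (via primitivity of Smith basis vectors, or your cleaner $R'/R \cong \Lambda[p]$ packaging) makes $\dim_{\BF_p}\Lambda[p] = \rank R \geq |B_2|+1$, contradicting the cap $\dim_{\BF_p}$ of the ambient $p$-torsion $\bigoplus_{i \in B_2}(\Z/p\Z) $ at $|B_2|$. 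All the individual steps check out — the cardinality of the direct sum in the infinite case, exactness of $-\otimes\Q$ and $\CC(p^\infty)\otimes\Q = 0$, divisibility of the invariant factors, and the embedding of the torsion of $\Lambda$ into the ambient torsion. What each approach buys: the paper's descent is shorter and handles all cardinalities in one stroke, and it actually proves more (a $p$-PR set is rationally independent under the chosen coordinates, i.e., the relation lattice of any finite subset with exact representative relations is trivial); your argument is structurally more informative in that it separates precisely where $|B_1|$ and $|B_2|$ each enter (free rank versus $p$-torsion rank) and uses \cref{thm2.3} a single time rather than iteratively, at the cost of a case split and the machinery of finitely generated abelian groups.
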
 
\begin{proof}
We identify each element in $\CC(p^\infty)$ in the form of $a/p^n$ for some $a, n \in \N$ with additive group operation. Hence, we can identify $ \bigoplus_{i \in B_1} \Q \oplus \bigoplus_{i \in B_2} \CC(p^\infty)$ as a subset of the real vector space $\R^{|B_1|+|B_2|}$. 

We claim that if $E \subseteq  \bigoplus_{i \in B_1} \Q \oplus \bigoplus_{i \in B_2} \CC(p^\infty)$ is $p$-PR, $E$ is linearly independent in $\R^{|B_1|+|B_2|}$. Indeed, suppose that $E$ is not linearly independent. Then there exist $\set{\ga_i \ | \ 1 \leq i \leq k} \subseteq E$ and $a_1,...,a_k \in \R$ such that $a_1\ga_1 + ... +a_k \ga_k = 0$, while not all $a_i$'s are zero and each $\ga_i \in\R^{|B_1|+|B_2|}$ is identified as above. Since the entries of each $\ga_i$ are in $\Q$, we may assume $a_i \in \Q$ for all $1 \leq i \leq k$. Furthermore, we may assume $a_i \in \Z$. Notice that if $p | a_i$ for all $i$, we may replace $a_i$ by $a_i/p$. Hence, we may find a choice of such $a_i$'s, not all  divisible by $p$. This contradicts that $E$ is $p$-PR by \cref{thm2.3}. 

Thus, $|E| \leq  |B_1|+|B_2|$.
\end{proof}

\section{Existence of $N$-PR Sets}

In this section, we show some existence results about $N$-PR sets and that large $N$-PR sets are plentiful. We first prove a lemma.

\begin{lem}\label{lem4.3}
Assume $E \subseteq \bigoplus_{\beta \in B} \Ga_\beta$ is uncountable.

(1) Let $p$ be a prime number. If $\Ga_\beta = \CC(p^\infty)$ for all $\beta \in B$, then $E$ contains a $p$-PR subset of the same cardinality.

(2) If $\Ga_\beta = \Q$ for all $\beta \in B$, then $E$ contains an independent subset of the same cardinality.
\end{lem}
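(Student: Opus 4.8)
The plan is to apply the sunflower ($\Delta$-system) lemma to the supports of the elements of $E$ and then read off the independent-like property one coordinate at a time. For $\ga \in \bigoplus_{\beta \in B}\Ga_\beta$ write $\supp(\ga) = \set{\beta \in B : \Proj_\beta(\ga) \neq 0}$, a finite subset of $B$ (I use additive notation throughout). Since each $\Ga_\beta$ is countable and each support is finite, for any fixed finite $S \subseteq B$ there are at most countably many $\ga$ with $\supp(\ga) = S$; as $E$ is uncountable, the family of distinct supports $\set{\supp(\ga) : \ga \in E}$ has cardinality $|E|$.

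First I would invoke the $\Delta$-system lemma: an uncountable family of finite sets contains a subfamily of the same cardinality forming a sunflower, i.e. any two of its members meet in a fixed finite \emph{root} $R$. Applying this to the distinct supports and then selecting one element of $E$ for each chosen support, I obtain $E' \subseteq E$ with $|E'| = |E|$ whose members have pairwise distinct supports satisfying $\supp(\ga) \cap \supp(\ga') = R$ for distinct $\ga, \ga' \in E'$ and $R \subseteq \supp(\ga)$ for every $\ga \in E'$. Discarding the at most one element whose support equals $R$, I may assume the \emph{petal} $P_\ga := \supp(\ga) \setminus R$ is nonempty for each $\ga \in E'$; the petals are then pairwise disjoint and disjoint from $R$.

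The point is that a relation now localizes to a single petal. Suppose $\ga_1, \dots, \ga_k \in E'$ are distinct and $\sum_{i=1}^k m_i \ga_i = 0$ with $m_i \in \Z$. Fix $j$ and choose $\beta \in P_{\ga_j}$. For $i \neq j$ we have $\beta \notin R$ and $\beta \notin P_{\ga_i}$, hence $\beta \notin \supp(\ga_i)$, so $\Proj_\beta(\ga_i) = 0$. Reading the relation in the $\beta$-coordinate therefore gives $m_j\,\Proj_\beta(\ga_j) = 0$, where $\Proj_\beta(\ga_j) \neq 0$ because $\beta \in \supp(\ga_j)$. In case (2) the group $\Q$ is torsion-free, so $m_j = 0$; as $j$ was arbitrary, $E'$ is independent. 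In case (1) the nonzero element $\Proj_\beta(\ga_j) \in \CC(p^\infty)$ has order a power of $p$ at least $p$, so $p \mid m_j$; as $j$ was arbitrary, \cref{thm2.3} shows $E'$ is $p$-PR.

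The main obstacle is purely set-theoretic: making the $\Delta$-system lemma preserve the \emph{exact} cardinality of $E$. The clean statement yields a sunflower of size $|E|$ when $|E|$ is regular; for singular $|E|$ one needs the more careful form of the lemma (or must exhaust $E$ by regular-sized pieces), and this bookkeeping is the only delicate point — the coordinatewise step above is entirely elementary. For part (2) this obstacle can in fact be bypassed: a maximal $\Z$-linearly independent subset $I \subseteq E$ of the torsion-free group $\bigoplus_\beta \Q$ satisfies $E \subseteq \Span_\Q I$ by maximality, and since $\Q$ is countable one has $|\Span_\Q I| = |I|$ once $I$ is infinite (which it is, as $E$ is uncountable); hence $|I| = |E|$, and $I$, being $\Z$-linearly independent with nonzero entries, is independent.
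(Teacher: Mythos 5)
Your coordinatewise localization is correct, and where the sunflower extraction works it is actually slicker than the paper's argument for part (1): a sunflower gives each $\ga_j$ a genuinely \emph{private} coordinate (all other members vanish there, not just earlier ones), so a single projection yields $p^{r}\mid m_j$ regardless of the orders of the elements, and \cref{thm2.3} applies in one shot with no reduction to the order-$p$ case. The paper instead uses Zorn's lemma to extract a maximal family with a weaker \emph{staircase} property — every finite subset can be enumerated so that each member is nontrivial at a coordinate where all \emph{earlier} members are trivial — which only localizes relations one element at a time and therefore forces the paper to first treat the special case where every element has order exactly $p$, and then reduce the general case through the quotients $\pi_{p^k}$. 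Your bypass for part (2) (a maximal $\Q$-linearly independent $I\subseteq E$ satisfies $E\subseteq\Span_\Q I$, and $|\Span_\Q I|=|I|$ for infinite $I$, forcing $|I|=|E|$) is complete, valid at every cardinality, and genuinely simpler than the paper's Zorn construction; that part stands.

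For part (1), however, there is a genuine gap, and it is not the bookkeeping you suggest: the exact-cardinality $\Delta$-system lemma is \emph{false} at singular cardinals, even for $2$-element sets, so no ``more careful form of the lemma'' exists. Take $|E|=\aleph_\omega$ with supports $\set{a_n,x}$, $x\in X_n$, $|X_n|=\aleph_n$, where the points $a_n$ and the sets $X_n$ are pairwise disjoint. Two supports in level $n$ meet in $\set{a_n}$, while supports in different levels are disjoint; hence any sunflower either lies inside a single level (size at most $\aleph_n$) or meets each level at most once (size at most $\aleph_0$), so no sunflower of size $\aleph_\omega$ exists. Your fallback of exhausting $E$ by regular-sized pieces also fails as stated: the sunflowers extracted from different pieces have different roots and interleaved petals, and a union of $p$-PR sets need not be $p$-PR, so \cref{thm2.3} gives no control over relations mixing the pieces. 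Tellingly, the family above \emph{is} $p$-PR outright (every element has a private coordinate $x$), which shows the sunflower structure is far stronger than the conclusion requires; the paper's staircase property is exactly the weakening that survives singular cardinals, because the maximality of a staircase family $S$ forces $|S|=|E|$ by counting touched coordinates (if $|S|<|E|$, some coordinate $\beta_0$ untouched by $S$ supports a new element that can be appended at the end of every enumeration), an argument indifferent to regularity. To repair your proof for singular $|E|$ you should replace the sunflower with such a maximal staircase family and then, since staircases alone do not handle mixed orders, follow the paper's reduction via the sets $E_k$ of elements of order $p^k$. (Be aware, incidentally, that the paper's own choice of $K$ with $|E_K|=|E|$ from the countable union $E=\bigcup_k E_k$ quietly presupposes $|E|$ has uncountable cofinality, so the singular case demands extra care on either route.)
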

\begin{proof}
Without loss of generality, we further assume for each $\beta \in B$ there exists $\ga \in E$ such that $\Proj_\beta(\ga)$ is non-trivial.
(1) We first prove (1) in the special case that every $\ga \in E$ has order $p$. We consider the collection $\CC$ of subsets of $E$ defined as $A \in \CC$ if for all finite subsets $F \subseteq A$ there exists an arrangement $F = \set{\ga_1,...,\ga_n}$ such that for each $1 \leq k \leq n$ there is some $\beta \in B$ with $\Proj_\beta(\ga_k)$ non-trivial, but $\Proj_\beta(\ga_j)$ trivial for all $1 \leq j \leq k$. We partially order $\CC$ by inclusion and Zorn's Lemma gives a maximal $S \in \CC$.

We claim $|S| = |E|$. Indeed, if $|S| < |E|$, we let $B_1 \subset B$ be given by $\beta \in B_1$ if there exists $\ga \in S$ such that $\Proj_\beta(\ga)$ is non-trivial. We thus have $|B_1| < |B|$. Let $\beta_0 \in B \bs B_1$ and $\ga_0 \in E$ be such that $\Proj_{\beta_0}(\ga_0)$ is non-trivial. Since $\beta_0 \in B \bs B_1$, $\ga_0 \notin S$ and we form the set $S_1 := S \cup \set{\ga_0}$. It is easy to see $S_1 \in \CC$ and this contradicts the maximality of $S$.

Moreover, the construction of $S$, the assumption that every element has order $p$ and \cref{thm2.3} imply $S$ is $p$-PR, which finishes the proof for the special case.

For the general case, we let $E_k$ be defined as the subset in $E$ containing elements of order $p^k$. Then $E = \cup_{k \geq 0} E_k$ and hence there exists a positive integer $K \in \N^+$ such that $|E_K| = |E|$. If $K = 1$, the special case finishes the proof, and therefore we suppose $K > 1$.

We have two cases. The first case is that there exists $1 \leq n_0 < K$ such that $|\pi_{p^{n_0}}(E_K)| = |E_K|$, while $ |\pi_{p^{n_0+1}}(E_K)| < |E_K|$. We let $B_1 \subset B$ be given by $\beta \in B_1$ if there exists $\ga \in E_K$ such that $\Proj_\beta(\ga)$ has order greater or equal to $p^{n_0+1}$. Since $|\pi_{p^{n_0+1}}(E_K)| < |E_K|$, we have $|B_1| < |B| = |E_K|$. For a subset $C \subset B$, we define the projection $\Proj_C: \bigoplus_{\beta \in B} \CC(p^\infty) \ra \bigoplus_{\beta \in C} \CC(p^\infty)$. We have $|\Proj_{B \bs  B_1}(\pi_{p^{n_0}}(E_K))| = |E_K|$. Furthermore, each $\alpha \in \Proj_{B \bs  B_1}(\pi_{p^{n_0}}(E_K))$ has the property that for all $\beta \in B \bs  B_1$, $\Proj_\beta(\alpha)$  has order $p$ or $1$. Thus, the special case gives a $p$-PR set $F \subset  \Proj_{B \bs  B_1}(\pi_{p^{n_0}}(E_K))$ such that $|F| = |\Proj_{B \bs  B_1}(\pi_{p^{n_0}}(E_K))| = |E_K| = |E|$ and the quotient lemma finishes the proof for this case.

The other case is that $|\pi_{p^K}(E_K)| = |E_K|$. Since $\pi_{p^K}(E_K)$ satisfies the special case, the quotient lemma again finishes the proof.

(2) The proof of (2) is similar to the first part of the argument of (1), but much simpler.
\end{proof}

\begin{thm}\label{thm2.8}
Let $E \subset \Ga$ be uncountable. Then there exists a prime number $p$ such that $E$ contains a $p$-PR set of the same cardinality.
\end{thm}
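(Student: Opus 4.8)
The plan is to pull everything back to the two situations already settled in \cref{lem4.3} by means of the structure theorem and the Quotient Lemma. First I would apply \cref{thm3.1} to realize $\Ga$ as a subgroup of $H := \bigoplus_\alpha \Q_\alpha \oplus \bigoplus_\beta \CC(p_\beta^\infty)$. Because being $p$-PR is an intrinsic, purely relational property (this is exactly the content of \cref{thm2.3}), \cref{lem2.1}(2) lets me forget $\Ga$ and work inside $H$: it suffices to exhibit a prime $p$ and a $p$-PR subset of $E$, regarded in $H$, of cardinality $|E|$.

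The decisive step is a pigeonhole over the coordinate projections of $H$. Grouping the primary summands by their prime, write $H = \bigoplus_\alpha \Q_\alpha \oplus \bigoplus_p H_0^{(p)}$ with $H_0^{(p)} := \bigoplus_{\beta : p_\beta = p} \CC(p^\infty)$, let $\pi_0 : H \ra \bigoplus_\alpha \Q_\alpha$ be the torsion quotient, and for each prime $p$ let $\rho_p : H \ra H_0^{(p)}$ be the coordinate projection onto the $p$-primary component. Each $\rho_p$ is a quotient map, its kernel being the complementary summand, so the Quotient Lemma will apply to it exactly as to $\pi_0$. Since every element of $H$ is recovered from its free part together with its finitely many nonzero primary parts, $E$ embeds into a product indexed by these countably many projections; hence, by the same countable-union/full-cardinality bookkeeping used repeatedly in the proof of \cref{lem4.3}, at least one of $\pi_0(E)$ or the $\rho_p(E)$ has cardinality $|E|$. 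I expect this cardinality bookkeeping to be the only delicate point of the argument.

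It then remains to convert a full-cardinality projection into a $p$-PR subset of $E$. If $\pi_0(E)$ is full, then it is an uncountable subset of the torsion-free group $\bigoplus_\alpha \Q_\alpha$, so \cref{lem4.3}(2) provides an independent subset $F \subseteq \pi_0(E)$ with $|F| = |E|$; as every nonzero element there has infinite order, the only relation $\prod_i \ga_i^{m_i} = 1$ among distinct members of $F$ is the trivial one, whence \cref{thm2.3} shows $F$ is $p$-PR for every prime $p$. If instead some $\rho_p(E)$ is full, then it is an uncountable subset of $H_0^{(p)} = \bigoplus \CC(p^\infty)$ and \cref{lem4.3}(1) provides a $p$-PR subset $F$ with $|F| = |E|$. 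In either case I would choose, for each element of $F$, a single preimage in $E$; this yields $E' \subseteq E$ with $|E'| = |E|$ on which the chosen projection is injective and whose image is the $p$-PR set $F$. The Quotient Lemma \cref{lem2.1}(1) then upgrades this to the conclusion that $E'$ itself is $p$-PR, finishing the proof.
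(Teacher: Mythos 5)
Your proposal is correct and follows essentially the same route as the paper's proof: embed into the structure-theorem group, pigeonhole over the countably many projections (torsion-free part and $p$-primary parts) to find one whose image of $E$ has full cardinality, apply \cref{lem4.3}(2) or (1) to that image, and pull back along a choice of preimages via \cref{lem2.1}(1). The only cosmetic difference is that the paper runs the pigeonhole on the cardinalities of the index sets $B_j$ rather than directly on the projection images $\pi_0(E)$ and $\rho_p(E)$, which is the same cardinality bookkeeping you identify as the delicate point.
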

\begin{proof}
Embed $E \subset \bigoplus_{i \in B_0} \Q \oplus \oplus_{j=1}^\infty \bigoplus_{i \in B_j} \CC(p_j^\infty)$, where $(B_j)_{j=0}^\infty$ are index sets and $p_j$ are distinct primes. We assume that for each index $i \in \bigcup_{j=0}^\infty B_j$, there exists $\ga \in E$ such that $\Proj_i(\ga)$ is non-trivial. Since $E$ is uncountable and the groups $\Q$ and $\CC(p_j^\infty)$ are countable, there exists $K \in \N$ such that $|B_K| = |E|$.

 If $K = 0$, from \cref{lem4.3} (2) we may extract an independent set $F \subset \pi_0(E)$ with $|F| = |E|$. If we choose $E' \subset E$ such that $\pi_0$ is one-to-one on $E'$ and $\pi_0(E') = F$, then $E'$ is $N$-PR for all $N \geq 2$ by \cref{lem2.1} and \cref{prop2.2}.
 
Similarly, if $K \geq 1$, by \cref{lem4.3} (1) we extract a $p_K$-PR subset $F \subset \pi_{p_K}(E)$ with $|F| = |E|$ and therefore obtain a $p_K$-PR subset in $E$ of the same cardinality.
\end{proof}

\begin{rem}
Since any $p$-PR set is a Sidon set, \cref{thm2.8} implies any uncountable subset in $\Ga$ contains a large Sidon set. $I_0$ sets are  more special interpolation sets. A set $E \subset \Ga$ is $I_0$ if every bounded function defined on $E$ can be interpolated as a Fourier transform of a discrete measure. If $p \geq 3$, any $p$-PR set is an $I_0$ set, and therefore in that case any uncountable subset in $\Ga$ contains a large $I_0$ set.
\end{rem}

\begin{thm}\label{cor2.7}
Let $E \subset \Ga$ be uncountable, $p$ be a prime number and $n \in \N$. Then $E$ contains a $p^n$-PR subset of the same cardinality if and only if $|\pi_{p^n}(E)| = |E|$.
\end{thm}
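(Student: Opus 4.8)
The plan is to prove both implications using the structural characterization of \cref{prop3.2} together with the extraction \cref{lem4.3}. The ``only if'' direction is immediate: if $F \subseteq E$ is $p^n$-PR with $|F| = |E|$, then taking $k = n$ in the implication $(1) \Rightarrow (2)$ of \cref{prop3.2} shows that $\pi_{p^n}$ is one-to-one on $F$, so $|\pi_{p^n}(E)| \ge |\pi_{p^n}(F)| = |F| = |E|$; since the reverse inequality is trivial, $|\pi_{p^n}(E)| = |E|$.

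For the ``if'' direction, suppose $|\pi_{p^n}(E)| = |E|$. First I would pass to a subset $E_0 \subseteq E$ on which $\pi_{p^n}$ is injective and with $\pi_{p^n}(E_0) = \pi_{p^n}(E)$, so that $|E_0| = |E|$. The key reduction is that it now suffices to find a $p$-PR subset $F \subseteq \pi_{p^n}(E_0)$ of full cardinality: setting $E_1 := E_0 \cap \pi_{p^n}^{-1}(F)$ gives a subset of $E$ with $\pi_{p^n}$ injective on $E_1$ and $\pi_{p^n}(E_1) = F$ being $p$-PR, whence $E_1$ is $p^n$-PR by the implication $(3) \Rightarrow (1)$ of \cref{prop3.2} (again with $k = n$, using $p^{n+1-k} = p$), and $|E_1| = |F| = |E|$.

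Thus everything reduces to extracting a large $p$-PR set from the uncountable set $\pi_{p^n}(E_0) \subseteq \Ga/\Ga_{p^n}$, and here I would first pin down the structure of $\Ga/\Ga_{p^n}$. Writing $\Ga \hookrightarrow \bigoplus_\alpha \Q_\alpha \oplus \bigoplus_\beta \CC(p_\beta^\infty)$ via \cref{thm3.1} and splitting the Pr\"ufer factors into those with $p_\beta = p$ and those with $p_\beta \neq p$, one checks that $\Ga_{p^n}$ is exactly the set of torsion elements whose $p$-primary part is annihilated by $p^{n-1}$; consequently the homomorphism sending $(q, t_p, t_{p'})$ to $(q, p^{n-1}t_p)$ has kernel $\Ga_{p^n}$ and induces an injection of $\Ga/\Ga_{p^n}$ into $\bigoplus_\alpha \Q_\alpha \oplus \bigoplus_{p_\beta = p} \CC(p^\infty)$. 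The crucial feature is that the only Pr\"ufer prime surviving in this target group is $p$. I would then run the dichotomy from the proof of \cref{thm2.8} on the image of $\pi_{p^n}(E_0)$: since that image injects into the product of its projection onto the $\CC(p^\infty)$-factors and its projection onto the $\Q$-factors, at least one projection has image of cardinality $|E|$. If the $\CC(p^\infty)$-projection does, \cref{lem4.3}(1) yields a large $p$-PR set; if only the $\Q$-projection does, \cref{lem4.3}(2) yields a large independent set, which is $p$-PR because every nontrivial element of the $\Q$-part has infinite order and hence range all of $\BT \supseteq \Z_p$, so that \cref{prop2.2} and its corollary apply. In either case the \cref{lem2.1} quotient/subgroup argument lifts the extracted set back through the relevant coordinate projection to a $p$-PR subset of $\pi_{p^n}(E_0)$ of full cardinality.

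The main obstacle is the extraction step itself, not the bookkeeping: one cannot simply take a maximal $\Q$-linearly independent subset of $\pi_{p^n}(E_0)$ and declare it $p$-PR, because — unlike the valid implication $p\text{-PR} \Rightarrow \text{linear independence}$ of \cref{prop4.2} — linear independence does \emph{not} imply $p$-PR in a Pr\"ufer group; for instance $\{(1/p, 1/p^2),\,(0, 1/p)\}$ is linearly independent yet satisfies $p\,\ga_1 + (p-1)\,\ga_2 = 0$, with $p \nmid (p-1)$. This is precisely why the delicate Zorn's-lemma construction of \cref{lem4.3}(1) must be invoked, and why reducing to a group whose only Pr\"ufer prime is $p$ is essential, so that \cref{lem4.3}(1) produces a set that is $p$-PR for the intended prime. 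The only remaining care is the cardinal-arithmetic observation that, $|E|$ being infinite, if the $\CC(p^\infty)$-projection has cardinality strictly below $|E|$ then the $\Q$-projection necessarily attains $|E|$.
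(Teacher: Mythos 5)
Your proof is correct and takes essentially the same route as the paper: the only-if direction via the injectivity clause of \cref{prop3.2}, and the if-direction via the cardinality dichotomy between the $\Q$- and $\CC(p^\infty)$-projections of $\pi_{p^n}(E)$, extraction by \cref{lem4.3}, and lifting through \cref{lem2.1}. If anything, you make explicit two steps the paper leaves tacit --- the embedding of $\Ga/\Ga_{p^n}$ into $\bigoplus_\alpha \Q \oplus \bigoplus \CC(p^\infty)$ via the multiplication-by-$p^{n-1}$ map, and the final upgrade from a $p$-PR image under $\pi_{p^n}$ to a $p^n$-PR subset of $E$ via the implication $(3)\Rightarrow(1)$ of \cref{prop3.2} with $k=n$, which is exactly what the paper's ``similarly finishes the proof'' is relying on.
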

\begin{proof}
If $E$ contains a $p^n$-PR subset $E_1$ of the same cardinality, by \cref{prop3.2} $|\pi_{p^n}(E)| \geq  |\pi_{p^n}(E_1)| = |E_1| = |E|$.

To see the converse, we define the projection 
\ali{\pi_1: \bigoplus_{i \in B_1} \Q \oplus \bigoplus_{i \in B_2} \CC(p^\infty) \ra \bigoplus_{i \in B_2} \CC(p^\infty).}
Assume $|\pi_{p^n}(E)| = |E|$. We claim that either $|\pi_0(\pi_{p^n}(E))| = |E|$ or $|\pi_1(\pi_{p^n}(E))| = |E|$. 

If $|\pi_0(\pi_{p^n}(E))| < |E|$ and $|\pi_1(\pi_{p^n}(E))| < |E|$, then because $E$ is uncountable,
\ali{|\pi_{p^n}(E)| \leq |\pi_0(\pi_{p^n}(E))||\pi_1(\pi_{p^n}(E))| < |E|,}
which is a contradiction. If $|\pi_0(\pi_{p^n}(E))| = |E|$, we appeal to (2) in \cref{lem4.3} to get a $p^n$-PR set in $\pi_0(\pi_{p^n}(E))$ and \cref{lem2.1} finishes the proof. If $|\pi_1(\pi_{p^n}(E))| = |E|$, then (1) in \cref{lem4.3} similarly finishes the proof.
\end{proof}

\begin{exmp}
If $E$ is countable, \cref{thm2.8} may fail. Let $E = \CC(p^\infty)$. $E$ is countably infinite, but by \cref{prop4.2}, $E$ only contains $p$-PR sets that are singletons.
\end{exmp}

\begin{prop}\label{prop3.6}
Suppose $E = \Ga \subset \bigoplus_{i \in B_1} \Q \oplus \bigoplus_{i \in B_2} \CC(p^\infty)$ is a countably infinite group. Then $E$ contains an infinite $p$-PR set if and only if $E$ contains an infinite independent set.
\end{prop}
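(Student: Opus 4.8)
The plan is to reduce both properties to two numerical invariants of $\Ga$: its torsion-free rank $r_0 := \dim_\Q(\Ga \otimes \Q)$ and the $p$-rank of its torsion subgroup $r_p := \dim_{\BF_p}\left(\Ga_0[p]\right)$, where $\Ga_0[p] = \set{\ga \in \Ga_0 : \ga^p = 1}$. I will show that \emph{each} of the two conditions ``$\Ga$ contains an infinite $p$-PR set'' and ``$\Ga$ contains an infinite independent set'' is equivalent to the single dichotomy $r_0 = \infty$ or $r_p = \infty$, and the proposition follows immediately.

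The direction assuming an infinite independent set is essentially free. If $I \subseteq \Ga$ is independent then every $\ga \in I$ is non-trivial, so either $\ga$ has infinite order, whence its image is an infinite closed subgroup of $\BT$ and $\range(\ga) = \BT \supseteq \Z_p$, or $\ga$ has order a power of $p$ (recall that $\Ga_0 \subseteq \bigoplus_{i \in B_2}\CC(p^\infty)$ is a $p$-group), whence $\range(\ga)$ is the group of $p^k$-th roots of unity and again contains $\Z_p$. In either case $\Z_p \subseteq \range(\ga)$, so the corollary following \cref{prop2.2} shows that $I$ is itself $p$-PR. Thus an infinite independent set is already an infinite $p$-PR set.

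For the converse, suppose $F \subseteq \Ga$ is an infinite $p$-PR set. The key step is to pass to the divisible (injective) hull $D$ of $\Ga$: since $\Ga$ has torsion-free rank $r_0$ and its torsion $\Ga_0$ is a $p$-group of $p$-rank $r_p$, minimality of the divisible hull together with \cref{thm3.1} gives $D \cong \bigoplus_{r_0}\Q \oplus \bigoplus_{r_p}\CC(p^\infty)$. Being $p$-PR depends only on the multiplicative relations among the characters (\cref{thm2.3}), so by \cref{lem2.1}(2) the set $F$ is still $p$-PR as a subset of $D$. Applying \cref{prop4.2} inside $D$ then yields $|F| \leq r_0 + r_p$, and since $F$ is infinite at least one of $r_0, r_p$ is infinite. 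Finally I build an infinite independent set from whichever rank is infinite. If $r_0 = \infty$, choose $g_1, g_2, \ldots \in \Ga$ whose images $\pi_0(g_i) \in \Ga/\Ga_0$ are $\Q$-linearly independent; any relation $\prod g_i^{m_i} = 1$ projects to $\sum m_i \pi_0(g_i) = 0$, forcing all $m_i = 0$ and hence $g_i^{m_i} = 1$, so $\set{g_i}$ is independent. If instead $r_p = \infty$, choose $h_1, h_2, \ldots \in \Ga_0[p]$ that are $\BF_p$-linearly independent; each has order $p$, and reducing the exponents in a relation $\prod h_i^{m_i} = 1$ modulo $p$ shows $p \mid m_i$ for every $i$, so $\set{h_i}$ is independent.

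The main obstacle is this converse direction, and specifically the passage to the divisible hull. One must verify both that the $p$-PR property of $F$ survives the re-embedding --- which is exactly the content of \cref{lem2.1}(2) combined with the intrinsic characterization in \cref{thm2.3} --- and that the hull's summand counts are \emph{precisely} $r_0$ and $r_p$, so that \cref{prop4.2} produces the sharp bound $|F| \le r_0 + r_p$ rather than a bound depending on the possibly wasteful original embedding. Once this rank dichotomy is established, the construction of the independent set in each case is routine.
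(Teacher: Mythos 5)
Your proof is correct, and at its core it runs on the same engine as the paper's: both reduce the hard direction to \cref{prop4.2} applied after re-embedding $\Ga$ into a divisible group with only as many summands as the independence data of $\Ga$ allows. The difference is in how that efficient embedding is produced and in the logical direction. The paper argues contrapositively: assuming all independent sets are finite, Zorn's Lemma yields a finite maximal independent set $S$, the embedding $S \hookrightarrow \bigoplus_{\ga \in S}\Ga_\ga$ (with $\Ga_\ga = \Q$ or $\CC(p^\infty)$ according to the order of $\ga$) is extended to all of $E$, and \cref{prop4.2} then bounds every $p$-PR set by $|S|$. You instead work directly with the divisible hull and its invariants $r_0$ and $r_p$, obtain the quantitative bound $|F| \leq r_0 + r_p$ for every $p$-PR set $F$, and manufacture an infinite independent set from whichever rank is infinite. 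Your formulation has a small advantage in rigor: the paper's assertion that maximality of $S$ gives $\ag{S} = E$ is not literally true (take $E = \Q$ and $S = \set{1}$, where $\ag{S} = \Z$); what actually justifies the paper's extension step is that $\ag{S}$ is essential in $E$ and $\bigoplus_{\ga \in S}\Ga_\ga$ is divisible, hence injective --- exactly the divisible-hull mechanism you make explicit, together with the standard computation that the hull of $\Ga$ has precisely $r_0$ copies of $\Q$ and $r_p$ copies of $\CC(p^\infty)$ (you are right that these sharp counts, rather than the possibly wasteful ambient $|B_1|, |B_2|$, are what make \cref{prop4.2} bite). What the paper's version buys is brevity; what yours buys is a Zorn-free hard direction, a sharp cardinality bound, and a clean dichotomy ($r_0 = \infty$ or $r_p = \infty$) to which both properties are shown equivalent. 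Your easy direction coincides with the paper's ``trivial'' converse: in this ambient group every non-trivial character has range containing $\Z_p$, so an independent set is already $p$-PR by the corollary to \cref{prop2.2}.
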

\begin{proof}
Assume that all independent sets in $E$ are finite. Zorn's Lemma gives a maximal independent set $S \subset E$, where the partial order is inclusion. Our assumption gives that $S$ is finite and the maximality implies $\ag{S} = E$. Since $S$ is independent, we embed $S$ into $\bigoplus_{\ga \in S} \Ga_\ga$, where $\Ga_\ga = \Q$ if $\ga$ has infinite order and $\Ga_\ga = \CC(p^\infty)$ if $\ga$ has order some power of $p$. Since $S$ is maximal, we may extend  the embedding to $\ag{S} = E$. From \cref{prop4.2}, this implies all $p$-PR sets in $E$ have cardinality at most $|S|$.
The converse is trivial.
\end{proof}

\begin{prop}
Suppose $\Ga$ is an uncountable infinite group and $N=p_1^{m_1}...p_k^{m_k}$ is an integer with prime numbers $p_i$, $1 \leq i \leq k$. Then $\Ga$ contains an $N$-PR set $E$ with $|E| = |\Ga|$ if and only if $|\pi_{p_i^{m_i}}(\Ga)| = |\Ga|$ for all $1 \leq i \leq k$.
\end{prop}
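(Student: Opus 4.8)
The plan is to treat the two implications separately: the forward (necessity) direction is a one-line cardinality count, while the converse (sufficiency) splits into a torsion-free-dominant case and a torsion-dominant case, the latter handled by extracting one prime power at a time and gluing the pieces along a diagonal.

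For necessity, suppose $E \subseteq \Ga$ is $N$-PR with $|E| = |\Ga| =: \kappa$, and write $N = p_1^{m_1}\cdots p_k^{m_k}$. By \cref{cor3.4} each $\pi_{p_i^{m_i}}$ is one-to-one on $E$, so $|\pi_{p_i^{m_i}}(\Ga)| \ge |\pi_{p_i^{m_i}}(E)| = |E| = \kappa$; since $\pi_{p_i^{m_i}}(\Ga)$ is a quotient of $\Ga$ the reverse inequality is automatic, giving equality. This direction needs no further work.

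For sufficiency, assume $|\pi_{p_i^{m_i}}(\Ga)| = \kappa$ for all $i$. I would first isolate the torsion-free contribution. Composing $\pi_{p_i^{m_i}}$ with the quotient by the torsion subgroup $\Ga_0$ collapses the extra torsion, so the torsion-free image of $\pi_{p_i^{m_i}}(\Ga)$ is exactly $\pi_0(\Ga)$. If $|\pi_0(\Ga)| = \kappa$, then, exactly as in \cref{thm2.8}, I extract via \cref{lem4.3}(2) an independent set $F \subseteq \pi_0(\Ga)$ of size $\kappa$, lift it to $E \subseteq \Ga$ on which $\pi_0$ is injective with $\pi_0(E) = F$, and note that $F$ is $N$-PR: every element of a torsion-free group has infinite order, so $\Z_N \subseteq \range(\ga)$, and independence together with this range condition gives the $N$-PR property by \cref{prop2.2}. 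Then \cref{lem2.1}(1) makes $E$ itself $N$-PR, settling this case uniformly for all primes at once.

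The substantive case is $|\pi_0(\Ga)| < \kappa$. Here, for each $i$, the cardinality dichotomy in the proof of \cref{cor2.7} applied to $\pi_{p_i^{m_i}}(\Ga) \subseteq \bigoplus \Q \oplus \bigoplus \CC(p_i^\infty)$, together with the fact that its torsion-free image $\pi_0(\Ga)$ is small, forces the $p_i$-primary part to carry the cardinality; concretely the $p_i$-primary component $\Ga^{(i)}$ of $\Ga_0$ satisfies $|\Ga^{(i)}| = |\pi_{p_i^{m_i}}(\Ga^{(i)})| = \kappa$. The key idea is then NOT to search for a single set behaving well for all primes simultaneously, but to decouple: for each $i$, \cref{cor2.7} applied inside $\bigoplus \CC(p_i^\infty)$ (where $\pi_{p_i^{m_i}}$ is just $\ga \mapsto \ga^{p_i^{m_i-1}}$) yields a $p_i^{m_i}$-PR set $E_i \subseteq \Ga^{(i)}$ with $|E_i| = \kappa$. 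I index each bijectively as $E_i = \set{\eta^{(i)}_\xi : \xi < \kappa}$ and form the diagonal set $E = \set{\eta^{(1)}_\xi + \cdots + \eta^{(k)}_\xi : \xi < \kappa} \subseteq \Ga_0$, which is legitimate because $\Ga_0$ is the direct sum of its primary components. Distinctness in the first block gives $|E| = \kappa$, and since the $p_i$-primary projection carries any relation among the $e_\xi := \eta^{(1)}_\xi + \cdots + \eta^{(k)}_\xi$ to the corresponding relation among the $\eta^{(i)}_\xi$, the $p_i^{m_i}$-PR property of $E_i$ forces $p_i^{m_i} \mid a_\xi$ for every exponent; ranging over all $i$ yields $N \mid a_\xi$, so $E$ is $N$-PR by \cref{thm2.3}.

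I expect the main obstacle to be recognizing that the per-prime extraction can be decoupled in this way. A naive iteration — extract a $p_1^{m_1}$-PR set, then thin it to a $p_2^{m_2}$-PR set, and so on — fails, because the first extraction may collapse the $p_2$-primary image and destroy the hypothesis $|\pi_{p_2^{m_2}}(\cdot)| = \kappa$ needed at the next step. The diagonal-gluing device, which is essentially the converse reading of the structural description in \cref{prop3.5}, sidesteps this entirely; the only remaining points requiring care are the bookkeeping showing each $\Ga^{(i)}$ inherits full cardinality once $\pi_0(\Ga)$ is known to be small, and the routine verification that distinct primary blocks do not interfere in a group relation.
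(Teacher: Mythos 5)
Your proof is correct and follows essentially the same route as the paper's: the same one-line necessity argument via \cref{cor3.4}, the same case split on whether $|\pi_0(\Ga)|$ is full, and in the torsion case the same diagonal construction gluing per-prime extracted sets along bijections --- the paper pulls back $p_i$-PR sets $S_i \subseteq \pi_{p_i^{m_i}}(\Ga)$ (obtained from \cref{lem4.3}) to sets $J_i$ of $p_i$-power order and concludes via \cref{cor3.4}, while you extract $p_i^{m_i}$-PR sets inside the primary components via \cref{cor2.7} and check relations directly with \cref{thm2.3}, a cosmetic rerouting through equivalent lemmas. If anything, you spell out the cardinal-arithmetic step (that $|\pi_{p_i^{m_i}}(\Ga_0)| = |\Ga|$ once $|\pi_0(\Ga)| < |\Ga|$) which the paper glosses with ``we may further assume $\Ga$ is a torsion group.''
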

\begin{proof}
If $\Ga$ contains an $N$-PR set $E$ with $|E| = |\Ga|$, then by \cref{cor3.4}  $|\pi_{p_i^{m_i}}(\Ga)| = |\Ga|$ for all $1 \leq i \leq k$. 

To see the converse, first of all, if $|\pi_0(\Ga)| = |\Ga|$, since $\Ga$ is uncountable, the conclusion follows by \cref{lem4.3} and \cref{lem2.1}. Thus we may assume $|\pi_0(\Ga)| < |\Ga|$. Hence $|\Ga_0| = |\Ga|$ and we may further assume $\Ga$ is a torsion group. If $|\pi_{p_i^{m_i}}(\Ga)| = |\Ga|$ for all $1 \leq i \leq k$, then by \cref{lem4.3} (1) we let $S_i$ be a subset in $\pi_{p_i^{m_i}}(\Ga)$ such that $S_i$ is $p_i$-PR with $|S_i| = |\Ga|$, $1 \leq i \leq k$. For $1 \leq i \leq k$ we let $J_i \subset \Ga$ be such that $\pi_{p_i^{m_i}}$ is one-to-one on $J_i$ and $\pi_{p_i^{m_i}}(J_i) = S_i$ and moreover, we may assume the order of each $\ga \in J_i$ is a power of $p_i$. Since $|J_i| = |\Ga|$ for all $1 \leq i \leq k$, we let, for $2 \leq i \leq k$, $f_i: J_1 \ra J_i$ be bijections and we form the set 
\ali{E := \set{\ga f_2(\ga)...f_k(\ga) : \ga \in J_1}.}
Then $|E| = |\Ga|$.  Since for all $\ga \in J_i$, $1 \leq i \leq k$, the order of $\ga$ is a power of $p_i$, if $\ga \in J_1$, $f_i(\ga) \in \Ga_{p_1^{m_1}}$ for all $2 \leq i \leq k$ and hence 
\ali{\pi_{p_1^{m_1}}(\ga f_2(\ga)...f_k(\ga)) = \pi_{p_1^{m_1}}(\ga).}
Similarly,
\ali{ \pi_{p_i^{m_i}}(\ga f_2(\ga)...f_k(\ga)) = \pi_{p_i^{m_i}}(f_i(\ga)),}
for all $2 \leq i \leq k$. Hence, $\pi_{p_i^{m_i}}(E) = \pi_{p_i^{m_i}}(J_i) = S_i$ for $1 \leq i \leq k$. By \cref{cor3.4}, $E$ is $N$-PR.
\end{proof}

\begin{rem}
In \cite{2012paper} the terminology ``$N$-large'' sets is introduced. A set $E \subset \Ga$ is $N$-large if $|Q_{N}(E)| < |E|$ where $Q_{N}: \Ga \ra \Ga / H_{N}$ is the quotient map and $H_{N} \subset \Ga$ is the subgroup of elements of orders divisible by $N$.

Theorem 2.2 in \cite{2012paper} states that if $E \subset \Ga$ and $N$ is the smallest integer for which $E$ is $N$-large, then for all primes powers $p^n$ dividing $N$ there exists a weak $|1-e^{\pi i/p^n}|$-Kronecker subset $F \subset E$ with $|F| = |E|$.

First, we note that the assumption in \cref{cor2.7} is weaker than the assumption in Theorem 2.2 in \cite{2012paper}; specifically, if $E$ is infinite and $N$-large for minimal $N = p^mp_1^{m_1}...p_k^{m_k}$ and $n \leq m$ for some integer $n$, then $|\pi_{p^n}(E)| = |E|$. To see this, we argue by contradiction and assume $|\pi_{p^n}(E)| < |E|$. Let $M := p^{n-1}p_1^{m_1}...p_k^{m_k} < N$. If $\ga \in \Ga_{p^n} \cap H_N$ and $k$ is the order of $\ga$, then $k$ divides $N$ while $p^n$ does not divide $k$. This implies $k$ divides $M$ and hence $\ga \in H_M$. Thus $\Ga_{p^n} \cap H_N = H_M$. As a result, the map 
\ali{ T: Q_M(E) &\ra Q_N(E) \times \pi_{p^n}(E) \\
       T(Q_M(\ga)) &= (Q_N(\ga), \pi_{p^n}(\ga)).}
is well-defined and injective. Thus, if $|\pi_{p^n}(E)| < |E|$, then 
\ali{ |Q_M(E)| \leq |Q_N(E)| |\pi_{p^n}(E)| < |E| |E| = |E|}
for infinite $E$. Thus $E$ is $M$-large and this contradicts the assumption that $N$ is minimal. 

Moreover, recall that $p^n$-PR sets are special weak $|1-e^{\pi i/p^n}|$-Kronecker sets. This shows \cref{cor2.7} improves Theorem 2.2 in \cite{2012paper} when $E$ is uncountable.

\end{rem}


\begin{thebibliography}{8}

  \addcontentsline{toc}{chapter}{Bibliography}

\bibitem{GaandH}
  J. Galindo and S. Hernandez,
  \emph{The concept of boundedness and the Bohr
compactification of a MAP Abelian group},
  Fundamenta Mathematicae, 159 (3):195--217, 2010.
  
\bibitem{CCHare2}
  C. C. Graham, K. E. Hare and T. W. Korner,
  \emph{$\e$-Kronecker and $I_0$ sets in abelian groups, II: sparseness of products of $\e$-Kronecker sets},
  Math. Proc. Cambridge Philos. Soc., 140(3): 491-508, 2006.
  
  \bibitem{2012paper}
  C. C. Graham and K. E. Hare,
  \emph{Existence of large $\e$-Kronecker and $FZI_0(U)$ sets in discrete abelian groups},
  Colloquium Math: 1-15, 2012.
  
  \bibitem{Harebook}
  C. C. Graham and K. E. Hare,
  \emph{Interpolation and Sidon sets for compact groups},
  Springer, New York, 2013.
  
  \bibitem{GandL}
  C. C. Graham and A. Lau,
  \emph{Relative weak compactness of orbits in Banach spaces associated with locally compact groups},
  Transactions of the American Mathematical Society, 359 (3):1129-1160, 2007.
  
  \bibitem{HareR}
  K. E. Hare and L. T. Ramsey,
  \emph{The Relationship Between $\e$-Kronecker Sets and Sidon Sets},
  Canad. Math. Bull. 59: 521-527, 2016.
  
  \bibitem{H}
  Hewitt, Edwin,
  \emph{The asymmetry of certain algebras of Fourier-Stieltjes transforms},
   Michigan Math. J. 5 , no. 2: 149-158, 1958. 
  
  \bibitem{HandK}
  E. Hewitt and S. Kakutani,
  \emph{A class of multiplicative linear functionals on the measure algebra of a locally compact Abelian group},
  Illinois J. Math. 4: 553-574, 1960.
  
  \bibitem{KandR}
  K. Kunen and W. Rudin,
  \emph{Lacunarity and the Bohr topology},
  Math. Proc. Camb. Phil. Soc. 126: 117-137, 1999.
  
  \bibitem{Von}
  J. von Neumann, 
  \emph{Ein System algebraisch unabhängiger Zahlen},
   Ann. of Math. 99: 134-141, 1928.
   
   \bibitem{structure}
  J. J. Rotman,
  \emph{An Introduction to the Theory of Groups},
  4th ed., Grad. Texts in Math. 148, Springer, New York, 1995.
  
  \bibitem{Rudin}
  W. Rudin,
  \emph{Fourier-Stieltjes transforms of measures on independent sets},
  Bull. Amer. Math. Soc. 66: 199-202, 1960.
  
  \bibitem{Rudin2}
  W. Rudin,
  \emph{Fourier Analysis on Groups},
   Interscience Tracts in Pure and Applied Math., No. 12. Wiley, New York, 1962.
  
  \bibitem{bigV}
  N. Th. Varopoulus,
  \emph{Tensor algebras over discrete spaces},
  J. Functional Anal., 3: 321-335, 1969.
   
\end{thebibliography}
\end{document}